 \newtheorem{theorem}{Theorem}[section]
 \newtheorem{corollary}[theorem]{Corollary}
 \newtheorem{lemma}[theorem]{Lemma}
 \newtheorem{proposition}[theorem]{Proposition}
 \theoremstyle{definition}
 \newtheorem{definition}[theorem]{Definition}
 \theoremstyle{remark}
 \newtheorem{remark}[theorem]{Remark}
\newcommand{\B}{\mathbb{B}}
\newcommand{\C}{\mathbb{C}}
\newcommand{\D}{\mathbb{D}}
\begin{document}


\title{Bloch-type spaces and extended Ces\`{a}ro operators in the unit ball of a complex Banach space
}

\author{HAMADA Hidetaka 
\\
\\
{\it Faculty of Science and Engineering,
Kyushu Sangyo University,}\\
{\it 3-1 Matsukadai 2-Chome, Higashi-ku,
Fukuoka 813-8503,
Japan}
\\
{\it Email: {h.hamada@ip.kyusan-u.ac.jp}
}
}

%
%



\date{}

\maketitle

\begin{abstract}
Let $\mathbb{B}$ be the unit ball of a complex Banach space $X$.
In this paper, we will generalize the Bloch-type spaces and the little Bloch-type spaces to 
the open unit ball $\mathbb{B}$ by using the radial derivative.
Next, we define an extended Ces\`{a}ro operator $T_{\varphi}$ with holomorphic symbol $\varphi$ and characterize those $\varphi$ for which $T_{\varphi}$ is bounded between the Bloch-type spaces and the little Bloch-type spaces.
We also characterize those $\varphi$ for which $T_{\varphi}$ is compact between the Bloch-type spaces and the little Bloch-type spaces under some additional assumption on the symbol $\varphi$.
When $\mathbb{B}$ is the open unit ball of a finite dimensional complex Banach space $X$, 
this additional assumption is automatically satisfied.
\end{abstract} 

{\bf Keywords}
{Bloch-type space,
complex Banach space,
extended Ces\`{a}ro operator, 
little Bloch-type space.}
\medskip

{\bf MSC(2000)}
{Primary 47B38; Secondary 32A37,
32A70, 
46E15}

\section{Introduction}
\label{intro}
\setcounter{equation}{0}

Let $\D$ denote the unit disc in $\C$.
For a holomorphic function $f(z)=\sum_{k=0}^{\infty}a_kz^k$ on $\D$,
the Ces\`{a}ro operator is defined by
\[
C(f)(z)=\sum_{j=0}^{\infty}\left(\frac{1}{j+1}\sum_{k=0}^{j}a_k\right)z^j.
\]
The boundedness of this operator on some spaces of holomorphic functions
was considered by many authors
(see \cite{M92},  \cite{SR98}, \cite{S87}, \cite{S96}, \cite{X97}).
The integral form of the Ces\`{a}ro operator is
\[
C(f)(z)=\frac{1}{z}\int_0^{z}f(\zeta)\frac{1}{1-\zeta}d\zeta
=\int_0^{1}f(tz)\left(\log\frac{1}{1-\zeta}\right)'|_{\zeta=tz} dt.
\]
As a natural extension of the Ces\`{a}ro operator,
the extended Ces\`{a}ro operator $T_{\varphi}$ with holomorphic symbol $\varphi$
is defined by
\[
T_{\varphi}f(z)=\int_0^z f(\zeta)\varphi'(\zeta)d\zeta.
\]
The boundedness and compactness of this operator on the Hardy space, the Bergman space
and the Bloch type spaces 
have been studied in \cite{AC01}, \cite{AS97}, \cite{WH05}.

Let $\B_n$ be the Euclidean unit ball in $\C^n$ and 
$H(\B_n)$ be the family of holomorphic 
functions on $\B_n$.
Given $\varphi \in H(\B_n)$,
the extended Ces\`{a}ro operator $T_{\varphi}$ with holomorphic symbol $\varphi$
is defined by
\[
T_{\varphi}f(z)=\int_0^1 f(tz)\mathcal{R}\varphi(tz)\frac{1}{t}dt,
\]
where
\[
\mathcal{R}\varphi(z)=\sum_{j=1}^n\frac{\partial \varphi}{\partial z_j}(z)z_j
\]
is the radial derivative of $\varphi$.
The boundedness and the compactness of this operator
on $\alpha$-Bloch spaces, little $\alpha$-Bloch spaces
and the Bergman space have been studied in
\cite{H03}, 
\cite{H04},
\cite{S05},
\cite{X04}.
Tang \cite{T07}
characterized those holomorphic symbols $\varphi$
in the Euclidean unit ball of $\mathbb{C}^n$
for which the induced extended Ces\`{a}ro operator 
$T_{\varphi}$ is bounded or compact on the Bloch-type spaces and the little Bloch-type spaces.

On the other hand, 
Wicker \cite{W} and
Blasco, Galindo and Miralles \cite{BGM14}
generalized the Bloch space to the unit ball of an infinite dimensional complex Hilbert space.
Deng and Ouyang \cite{DO06} and
Chu, Hamada, Honda and Kohr \cite{CHHKb} independently generalized the Bloch space to 
an infinite dimensional bounded symmetric domain realized as the open unit ball of a JB*-triple $X$
and studied the boundedness and the compactness of composition operators between the Bloch spaces on bounded symmetric domains.
Blasco, Galindo, Lindstr\"om and Miralles \cite{BGLM17}
provided necessary and sufficient conditions for compactness of composition operators
on the space of Bloch functions on the unit ball of a complex Hilbert space
with additional compactness assumptions on the set related to the composition symbol.
Further, Hamada \cite{H17} studied the weighted composition operators from the Hardy space $H^{\infty}$ to the Bloch space on bounded symmetric domains.

In this paper, 
we will generalize the Bloch-type spaces and the little Bloch-type spaces to 
the open unit ball $\mathbb{B}$ of a general infinite dimensional complex Banach space $X$
by using the radial derivative.
Our definition is new, but if $X$ is a complex Hilbert space, it is equivalent to the definition which is an extension of
that in the finite dimensional case.
Next, we define an extended Ces\`{a}ro operator $T_{\varphi}$ with holomorphic symbol $\varphi$ and characterize those $\varphi$ for which $T_{\varphi}$ is bounded between the Bloch-type spaces and the little Bloch-type spaces.
As in  \cite{BGLM17}, under some additional assumption on the symbol $\varphi$,
we also characterize those $\varphi$ for which $T_{\varphi}$ is compact between the Bloch-type spaces and the little Bloch-type spaces.
When $\mathbb{B}$ is the open unit ball of a finite dimensional complex Banach space $X$, 
this additional assumption is automatically satisfied.
There are some gaps in \cite{T07}.
We overcome these gaps and give a complete proof in this paper
in the setting of the unit ball of a general infinite dimensional complex Banach space.

\section{Bloch-type spaces and little Bloch-type spaces}
\label{section-Bloch-type}
\setcounter{equation}{0}

A positive continuous function $\omega$ on $[0,1)$ is said to be normal
if there are constants $\delta\in [0,1)$ and $0<a<b<\infty$
such that
\begin{equation}
\label{omega}
\frac{\omega(r)}{(1-r)^a}
\mbox{ is decreasing}
\quad
\mbox{and}
\quad
\frac{\omega(r)}{(1-r)^b}
\mbox{ is increasing}
\quad
\mbox{on }
[\delta, 1).
\end{equation}

Then a normal function $\omega$ is strictly decreasing on $[\delta,1)$ and 
$\omega(r) \to 0$ as $r\to 1$.

Let $\D$ be the unit disc in $\C$.

\begin{lemma}
\label{lem-g}
Let $\omega$ be a normal function.
Denote 
$k_0=\max(0,[\log_2\frac{1}{\omega(\delta)}])$,
$r_k=(\omega|_{[\delta, 1)})^{-1}(\frac{1}{2^k})$
and
$n_k=[\frac{1}{1-r_k}]$
for $k>k_0$,
where the symbol $[x]$ means the greatest integer not more than $x$.
Let
\[
g(\zeta)=1+\sum_{k>k_0}^{\infty}2^k\zeta^{n_k},
\quad
\zeta \in \mathbb{D}.
\]
Then
\begin{enumerate}
\item[$({\rm i})$]
$g$ is a holomorphic function on $\mathbb{D}$
such that
$g(r)$ is increasing on $[0,1)$ and 
\[
0<C_1=\inf_{r\in [0,1)}\omega(r)g(r)\leq
\sup_{r\in [0,1)}\omega(r)g(r)=C_2
<\infty;
\]
\item[$({\rm ii})$]
there exists a positive constant $C_3$ such that the inequality
\[
\int_0^rg(t)dt\leq C_3\int_0^{r^2}g(t)dt
\]
holds for all $r\in [r_1,1)$,
where $r_1\in (0,1)$ is a constant such that
\[
\int_0^{r_1}
g(t)dt=1.
\]
\end{enumerate}
\end{lemma}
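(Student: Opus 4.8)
The plan is to treat the two items separately, the heart of the matter being the two-sided estimate in (i). Throughout I write $\lambda := (\omega|_{[\delta,1)})^{-1}$, which is well defined and decreasing on $(0,\omega(\delta)]$ since $\omega$ is strictly decreasing on $[\delta,1)$ with $\omega(r)\to 0$; for $k>k_0$ one checks $2^{-k}<\omega(\delta)$, so $r_k=\lambda(2^{-k})$ is well defined, lies in $(\delta,1)$, increases to $1$, and satisfies $\omega(r_k)=2^{-k}$. First I would record the two consequences of normality that drive everything: comparing the values of $\omega(r)/(1-r)^b$ (increasing) and $\omega(r)/(1-r)^a$ (decreasing) at $r_k$ with those at $\delta$ yields the two-sided exponential control $c\,2^{k/b}\le \frac{1}{1-r_k}\le c'\,2^{k/a}$, so that $n_k=[\frac{1}{1-r_k}]$ grows at least like $2^{k/b}$. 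In particular $2^{k/n_k}\to 1$, so the gap series has radius of convergence $1$ and $g$ is holomorphic on $\mathbb{D}$; monotonicity of $g$ on $[0,1)$ is immediate because all coefficients are nonnegative.

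For the estimate in (i) fix $r$ close to $1$ and choose $k$ with $r_k\le r<r_{k+1}$, so that $2^{-k-1}<\omega(r)\le 2^{-k}$. The lower bound comes from the single term $j=k$: since $r\ge r_k$ and $n_k\le \frac{1}{1-r_k}$ one has $r^{n_k}\ge r_k^{1/(1-r_k)}$, and the latter is bounded below by a positive constant $c_0$ (it tends to $e^{-1}$ as $r_k\to1$), whence $\omega(r)g(r)\ge 2^{-k-1}\cdot 2^{k}c_0=c_0/2$. The upper bound is the main obstacle. I would split $g(r)=1+\sum_{k_0<j\le k}2^j r^{n_j}+\sum_{j>k}2^j r^{n_j}$; the first sum is at most $\sum_{j\le k}2^j\le 2^{k+1}$, contributing $O(1)$ after multiplication by $\omega(r)\le 2^{-k}$. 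For the tail I use $r^{n_j}\le e^{-n_j(1-r)}$ together with $n_j\gtrsim \frac{1}{1-r_j}$ and the normality inequality $(\frac{1-r}{1-r_j})^b\ge \omega(r)/\omega(r_j)\ge 2^{j-k-1}$ (valid since $r<r_j$), which gives $n_j(1-r)\gtrsim 2^{(j-k-1)/b}$. Hence $2^jr^{n_j}\le 2^{k}\,2^{m}\exp(-c\,2^{(m-1)/b})$ with $m=j-k$, and the double-exponential decay makes $\sum_{m\ge1}2^m\exp(-c\,2^{(m-1)/b})$ a finite constant $S$, so the tail is $\le S\,2^k$ and again contributes $O(1)$. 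The remaining $r$ in a compact subinterval of $[0,1)$ are handled by continuity and positivity of $\omega g$, producing positive finite $C_1,C_2$.

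For (ii), set $G(r)=\int_0^r g$; since $g\ge 1$ is increasing and not identically $1$, $G$ is continuous, strictly increasing, $G(0)=0$ and $\int_0^1 g>1$, so $r_1\in(0,1)$ exists and is unique. I would prove the doubling inequality $G(r)\le C_3\,G(r^2)$ by estimating the increment against $G(r^2)$. Writing $G(r)=G(r^2)+\int_{r^2}^r g$ and using monotonicity of $g$, the increment satisfies $\int_{r^2}^r g\le (1-r)g(r)\le (1-r)C_2/\omega(r)$. For a matching lower bound, take $r$ close enough to $1$ that $2r^2-1\ge\delta$; then $G(r^2)\ge\int_{2r^2-1}^{r^2}g\ge (1-r^2)g(2r^2-1)\ge (1-r)\,C_1/\omega(2r^2-1)$, and since $2r^2-1<r$ with $1-(2r^2-1)=2(1-r)(1+r)\le 4(1-r)$, the increasing function $\omega(t)/(1-t)^b$ yields $\omega(2r^2-1)\le 4^b\omega(r)$. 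Combining, $\int_{r^2}^r g\le (4^bC_2/C_1)\,G(r^2)$, so $G(r)\le (1+4^bC_2/C_1)\,G(r^2)$. For $r\in[r_1,1)$ not covered by this regime the ratio $G(r)/G(r^2)$ is a continuous function on a compact interval with positive denominator, hence bounded, and taking $C_3$ to be the larger of the two constants finishes the proof.
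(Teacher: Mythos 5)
Your proposal is correct, but it diverges from the paper's proof in two notable ways. First, you prove part (i) from scratch via the dyadic decomposition (the two-sided bound $c\,2^{k/b}\le \frac{1}{1-r_k}\le c'\,2^{k/a}$ from normality, the single-term lower bound $2^k r^{n_k}$, and the double-exponentially decaying tail $\sum_m 2^m e^{-c2^{(m-1)/b}}$); the paper does not prove (i) at all, it simply quotes it from Hu--Wang \cite[Theorem 2.3]{HW05}. Your argument is the standard gap-series proof and checks out, so this is extra (if redundant) content. Second, for part (ii) both you and the paper use the same skeleton --- a compactness/continuity argument on an initial interval $[r_1,\cdot]$, plus a near-boundary estimate that dominates the increment $\int_{r^2}^r g$ by a constant times a sub-integral of $\int_0^{r^2} g$ using (i) together with the monotonicity of $\omega(t)/(1-t)^b$ --- but the execution differs. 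The paper writes $\frac{1}{\omega(t)}=\frac{(1-t)^b}{\omega(t)}\cdot\frac{1}{(1-t)^b}$, bounds each factor pointwise on $[r^2,r]$, and compares against the integral of the same product over $[r^4,r^2]$ (hence its case split at $r>\delta^{1/4}$, needed so that $r^4\ge\delta$). You instead exploit the monotonicity of $g$ to collapse both integrals to endpoint values, bounding $\int_{r^2}^r g\le (1-r)g(r)\le (1-r)C_2/\omega(r)$ and $\int_0^{r^2}g\ge (1-r^2)g(2r^2-1)\ge(1-r)C_1/\omega(2r^2-1)$, then compare $\omega(2r^2-1)\le 4^b\omega(r)$ directly by normality (hence your case split at $2r^2-1\ge\delta$). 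Your route is slightly cleaner in that the product-splitting trick is avoided; the paper's route never invokes monotonicity of $g$, only the two-sided bound $C_1\le\omega g\le C_2$. Both yield the same conclusion with explicit constants of the same flavor.
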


\begin{proof}
(i) was proved in \cite[Theorem 2.3]{HW05}.
We give a proof for (ii).
Let $\delta$ be the constant in \eqref{omega}.
We may assume that $r_1<\delta^{1/4}$.
First, we consider the case $r\in [r_1,\delta^{1/4}]$.
Then $\int_0^rg(t)dt$ is bounded above and 
$\int_0^{r^2}g(t)dt$ is bounded below by a positive constant.
So, there exists a constant $C>0$ such that
\[
\int_0^rg(t)dt\leq C\int_0^{r^2}g(t)dt,
\quad
r\in[r_1,\delta^{1/4}].
\]
Next, we consider the case $r\in(\delta^{1/4},1)$.
In this case, by (i) and \eqref{omega}, we have
\begin{align*}
\int_{r^2}^{r}g(t)dt
&\leq
C_2\int^r_{r^2}\frac{1}{\omega(t)}dt
\\
&=
C_2\int^r_{r^2}\frac{(1-t)^b}{\omega(t)}\frac{1}{(1-t)^b}dt
\\
&\leq
C_2\frac{(1-r^2)^b}{\omega(r^2)}\frac{r-r^2}{(1-r)^b}
\\
&=
C_2\frac{(1-r^2)^b}{\omega(r^2)}\frac{(r-r^2)(1+r)^b(1+r^2)^b}{(1-r^4)^b}
\\
&\leq
C_2\frac{(r-r^2)(1+r)^b(1+r^2)^b}{r^2-r^4}\int_{r^4}^{r^2}\frac{(1-t)^b}{\omega(t)}\frac{1}{(1-t)^b}dt
\\
&\leq
\frac{C_2(1+r)^b(1+r^2)^b}{C_1(r+r^2)}\int_{r^4}^{r^2}g(t)dt.
\end{align*}
Therefore, there exists a constant $C'>0$ such that
\[
\int_0^rg(t)dt=\int_0^{r^2}g(t)dt+\int_{r^2}^{r}g(t)dt\leq C'\int_0^{r^2}g(t)dt,
\quad
r\in(\delta^{1/4},1).
\]
This completes the proof.
\end{proof}

\begin{remark}
In \cite[eq.(3.5)]{T07},
it is claimed that there exists a constant $C>0$ such that
\begin{equation}
\label{g-mistake}
\int_0^{\| w\|}
g(t)dt\leq
C
\int_0^{\| w\|^2}
g(t)dt,
\quad 
w\in \B.
\end{equation}
However, this is impossible for small $\| w\|$, because $g(0)=1$.
\end{remark}

Let $\B$ be the unit ball of a complex Banach space $X$
with norm $\| \cdot\|$.
A normal function $\omega$ will be extended to a function on 
$\B$ by $\omega(z)=\omega(\| z\|)$.
Let $H(\B)$ denote the set of holomorphic mappings from $\B$ into $\C$.

\begin{definition}
Let $\B$ be the open unit ball of a complex Banach space $X$
and let $\omega$ be a normal function on $\B$.
A function $f\in H(\B)$ is called a Bloch-type function
with respect to $\omega$
if
\[
\| f\|_{\mathcal{B}_{\mathcal{R}}(\B),\omega}
=\sup \{ \omega(z)| \mathcal{R}f(z)| : z\in \B\}<+\infty,
\]
where
$
\mathcal{R}f(z)=
Df(z)z
$
and $Df(z)$ is the Fr\'{e}chet derivative of $f$ at $z$.
\end{definition}

The class of all Bloch-type functions with respect to $\omega$
on $\B$ is
called a Bloch-type space on $\B$ and is denoted by
$\mathcal{B}_{\mathcal{R}}(\B)_{\omega}$. 
Then
\[\| f\|_{{\mathcal{R}},\omega}=|f(0)|+\| f\|_{\mathcal{B}_{\mathcal{R}}(\B),\omega}\]
is a norm on $\mathcal{B}_{\mathcal{R}}(\B)_{\omega}$. 

The following proposition is a generalization of
the result on the Euclidean unit ball in $\C^n$
\cite[Lemma 3.1]{T07}
to the unit ball of a complex Banach space.

\begin{proposition}
\label{p.growth}
Let $\omega$ be a normal function.
Then there exists a constant $C_4>0$ such that
\[
|f(z)|\leq
C_4\left(
1+\int_0^{\| z\|}
\frac{1}{\omega(t)}dt
\right)
\| f\|_{{\mathcal{R}},\omega}
\]
for $f\in \mathcal{B}_{\mathcal{R}}(\B)_{\omega}$
and $z\in \B$.
\end{proposition}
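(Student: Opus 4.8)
The plan is to reduce the estimate to one complex variable along the complex line through $z$ and the origin, and then combine the fundamental theorem of calculus with the growth bound coming from the Bloch seminorm. Fix $z\in\B$ with $z\neq 0$ (the case $z=0$ being trivial), set $u=z/\|z\|$, and define $g(\lambda)=f(\lambda u)$ for $\lambda\in\D$. Since $f$ is holomorphic on $\B$ and $\lambda u\in\B$ whenever $|\lambda|<1$, the function $g$ is holomorphic on $\D$. The chain rule gives $g'(\lambda)=Df(\lambda u)u$, hence
\[
\mathcal{R}f(\lambda u)=Df(\lambda u)(\lambda u)=\lambda g'(\lambda),\qquad \lambda\in\D,
\]
so that $g'(\lambda)=\lambda^{-1}\mathcal{R}f(\lambda u)$ for $\lambda\neq 0$. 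Writing $M=\|f\|_{\mathcal{B}_{\mathcal{R}}(\B),\omega}$ and integrating $g'$ along the real segment from $0$ to $\|z\|$ yields
\[
f(z)=f(0)+\int_0^{\|z\|}g'(t)\,dt,
\]
so that $|f(z)|\le|f(0)|+\int_0^{\|z\|}|g'(t)|\,dt$.

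The only real difficulty is that the naive bound $|g'(t)|=t^{-1}|\mathcal{R}f(tu)|\le M/(t\,\omega(t))$ fails to be integrable near $t=0$, because the radial derivative vanishes at the origin and therefore carries no information about $Df(0)$ by itself. I would resolve this with the maximum modulus principle on a fixed disc. The map $\lambda\mapsto\mathcal{R}f(\lambda u)=\lambda g'(\lambda)$ is holomorphic and vanishes at $0$, so $g'$ extends holomorphically across $0$; applying the maximum principle to $g'$ on the closed disc $\{|\lambda|\le 1/2\}$ and using $|\mathcal{R}f(\lambda u)|\le M/\omega(\|\lambda u\|)=M/\omega(|\lambda|)$ gives
\[
\sup_{|\lambda|\le 1/2}|g'(\lambda)|=\sup_{|\lambda|=1/2}\frac{|\mathcal{R}f(\lambda u)|}{1/2}\le\frac{2M}{\omega(1/2)},
\]
where $\omega(1/2)>0$ by the continuity and positivity of $\omega$.

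Finally I would split the integral at $1/2$. When $\|z\|\le 1/2$ the whole segment lies in the disc where $g'$ is controlled, so $\int_0^{\|z\|}|g'(t)|\,dt\le\|z\|\cdot 2M/\omega(1/2)\le M/\omega(1/2)$. When $\|z\|>1/2$, I would bound $\int_0^{1/2}|g'(t)|\,dt\le M/\omega(1/2)$ as above, and on $[1/2,\|z\|]$ use $t^{-1}\le 2$ to obtain
\[
\int_{1/2}^{\|z\|}|g'(t)|\,dt\le 2\int_{1/2}^{\|z\|}|\mathcal{R}f(tu)|\,dt\le 2M\int_0^{\|z\|}\frac{dt}{\omega(t)}.
\]
Combining these with $|f(0)|\le\|f\|_{\mathcal{R},\omega}$ and $M\le\|f\|_{\mathcal{R},\omega}$ produces the claimed inequality with $C_4=\max\{1+\omega(1/2)^{-1},\,2\}$. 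The main obstacle is thus the near-origin behaviour, and the maximum-principle step is the crux; the remainder is a routine application of the fundamental theorem of calculus and the definition of the Bloch seminorm.
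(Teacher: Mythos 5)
Your proof is correct and follows essentially the same route as the paper: integrate radially via the fundamental theorem of calculus, with the non-integrable singularity of $\mathcal{R}f(tz)/t$ at the origin handled by a Schwarz-lemma/maximum-modulus argument, and the Bloch seminorm bound $|\mathcal{R}f|\leq \|f\|_{\mathcal{B}_{\mathcal{R}}(\B),\omega}/\omega$ away from the origin. The only differences are cosmetic: the paper invokes the Schwarz lemma for $\mathcal{R}f$ directly and splits the radial integral at $\|z\|/2$, whereas you unwind the Schwarz lemma into an explicit maximum-principle step for $g'$ on the disc of radius $1/2$ and split at that fixed radius.
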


\begin{proof}
First we consider the case $\| z\|<1/2$.
Since $\mathcal{R}f(0)=0$
and
\[
|\mathcal{R}f(z)|\leq 
\frac{\| f\|_{\mathcal{B}_{\mathcal{R}}(\B),\omega}}
{\min_{t\in[0,1/2]}\omega(t)},
\quad
\| z\|<\frac{1}{2},
\]
we have
\[
|\mathcal{R}f(z)|\leq 
\frac{2\| f\|_{\mathcal{B}_{\mathcal{R}}(\B),\omega}}
{\min_{t\in[0,1/2]}\omega(t)}\| z\|,
\quad
\| z\|<\frac{1}{2}
\]
by the Schwarz lemma.
Note that ${\min_{t\in[0,1/2]}\omega(t)}>0$, 
since $\omega$ is a positive continuous function on $[0,1)$.
Therefore, we have
\begin{align}
|f(z)|
&\leq 
|f(0)|+|f(z)-f(0)|
\nonumber \\
&\leq
|f(0)|+\int_{0}^{1}
\left| \frac{\mathcal{R}f(tz)}{t}
\right|dt
\nonumber \\
&\leq
|f(0)|+
\frac{2\| z\|}
{\min_{t\in[0,1/2]}\omega(t)}
\| f\|_{\mathcal{B}_{\mathcal{R}}(\B),\omega}.
\label{eq2.5}
\end{align}

Next, let $z\in \B$ with $\| z\|\geq 1/2$.
Then, applying \eqref{eq2.5} at the point $\frac{z}{2}$, we have
\begin{align*}
|f(z)|
&\leq
\left| f\left(
\frac{z}{2}
\right)\right|
+
\left|
f(z)-f\left(\frac{z}{2}\right)
\right|
\\
&\leq
\left| f\left(
\frac{z}{2}
\right)\right|
+
\int_{1/2}^{1}
\left| \frac{\mathcal{R}f(tz)}{t}
\right|dt
\\
&\leq
\left| f\left(
\frac{z}{2}
\right)\right|
+
4\| f\|_{\mathcal{B}_{\mathcal{R}}(\B),\omega}
\int_{1/2}^1\frac{\| z\|}{\omega(t\|z\|)}dt
\\
&\leq
|f(0)|+
\frac{\| z\|}
{\min_{t\in[0,1/2]}\omega(t)}
\| f\|_{\mathcal{B}_{\mathcal{R}}(\B),\omega}
+
4\| f\|_{\mathcal{B}_{\mathcal{R}}(\B),\omega}
\int_0^{\| z\|}
\frac{1}{\omega(t)}dt.
\end{align*}

\end{proof}

\begin{proposition}
\label{p.banach}
The Bloch-type space 
$\mathcal{B}_{\mathcal{R}}(\B)_{\omega}$ 
is a complex Banach space with
 the norm $\| f\|_{{\mathcal{R}},\omega}$.
\end{proposition}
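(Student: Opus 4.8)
The plan is to prove completeness by the standard scheme for Bloch-type spaces: pointwise control of functions and their radial derivatives by the norm, extraction of a locally uniform limit, and a Fatou-type passage to the limit inside the supremum defining the seminorm. That $\|\cdot\|_{\mathcal{R},\omega}$ is genuinely a norm is quick to check first: homogeneity and the triangle inequality are immediate from the definition, and if $\|f\|_{\mathcal{R},\omega}=0$ then $f(0)=0$ and $\mathcal{R}f\equiv 0$; since $\frac{d}{dt}f(tz)=\frac{1}{t}\mathcal{R}f(tz)=0$ for $t\in(0,1)$, the function $f$ is constant along every ray through the origin, so $f\equiv f(0)=0$. Thus the substance of the proposition is completeness.

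I would take a Cauchy sequence $\{f_m\}$ in $\mathcal{B}_{\mathcal{R}}(\B)_{\omega}$. By Proposition~\ref{p.growth}, for every $r\in(0,1)$ and every $z$ with $\|z\|\le r$ one has
\[
|f_m(z)-f_n(z)|\le C_4\left(1+\int_0^{r}\frac{1}{\omega(t)}\,dt\right)\|f_m-f_n\|_{\mathcal{R},\omega},
\]
and the integral is finite since $\omega$ is positive and continuous on $[0,1)$. Hence $\{f_m\}$ is uniformly Cauchy on each ball $\{\|z\|\le r\}$ with $r<1$, so it converges uniformly on such balls to a function $f$, which is holomorphic on $\B$ because a locally uniform limit of holomorphic mappings on a ball in a complex Banach space is again holomorphic. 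To obtain convergence of the radial derivatives, I would slice to one complex variable: for fixed $z\in\B$ set $h_m(\zeta)=f_m(\zeta z)$, holomorphic on the disc $\{|\zeta|<1/\|z\|\}$, so that $h_m'(\zeta)=Df_m(\zeta z)z$ and in particular $\mathcal{R}f_m(z)=Df_m(z)z=h_m'(1)$. Since $h_m\to h$, with $h(\zeta)=f(\zeta z)$, uniformly on compact subsets of that disc, the classical one-variable Cauchy estimate gives $h_m'\to h'$ locally uniformly, whence $\mathcal{R}f_m(z)=h_m'(1)\to h'(1)=\mathcal{R}f(z)$ for each $z\in\B$.

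Finally I would pass to the limit inside the norm. Given $\varepsilon>0$, choose $N$ so that $\|f_m-f_n\|_{\mathcal{R},\omega}\le\varepsilon$ for all $m,n\ge N$; then for each fixed $z\in\B$,
\[
|f_m(0)-f_n(0)|+\omega(z)|\mathcal{R}f_m(z)-\mathcal{R}f_n(z)|\le\varepsilon.
\]
Letting $n\to\infty$ and using the pointwise convergences $f_n(0)\to f(0)$ and $\mathcal{R}f_n(z)\to\mathcal{R}f(z)$ yields $|f_m(0)-f(0)|+\omega(z)|\mathcal{R}f_m(z)-\mathcal{R}f(z)|\le\varepsilon$; taking the supremum over $z\in\B$ gives $\|f_m-f\|_{\mathcal{R},\omega}\le\varepsilon$ for all $m\ge N$. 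In particular $f-f_N\in\mathcal{B}_{\mathcal{R}}(\B)_{\omega}$, so $f=(f-f_N)+f_N\in\mathcal{B}_{\mathcal{R}}(\B)_{\omega}$, and $f_m\to f$ in the norm $\|\cdot\|_{\mathcal{R},\omega}$, which proves completeness.

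The main obstacle is the infinite-dimensional step: justifying that the locally uniform limit $f$ is holomorphic and that its Fréchet, hence radial, derivatives are the limits of those of $f_m$. The slicing to the complex line reduces the derivative convergence to the one-variable Cauchy estimate and thereby sidesteps the subtleties of differentiating in a general Banach space; the holomorphy of $f$ rests on the corresponding Weierstrass-type theorem for Banach-valued (here scalar-valued) holomorphic mappings, which is available since the convergence is uniform on every ball strictly inside $\B$.
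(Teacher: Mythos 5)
Your proof is correct and takes essentially the same route as the paper's: the growth estimate of Proposition~\ref{p.growth} gives uniform Cauchyness on balls $\{\|z\|\le r\}$, hence a locally uniform holomorphic limit $f$, and one then lets $n\to\infty$ in the Cauchy inequality and takes the supremum to get norm convergence. The only differences are that you also verify the norm axioms and spell out, via the one-variable slicing argument, the pointwise convergence $\mathcal{R}f_n(z)\to\mathcal{R}f(z)$ that the paper's proof invokes implicitly from locally uniform convergence.
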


\begin{proof}
Let $(f^k)$ be a Cauchy sequence in 
$\mathcal{B}_{\mathcal{R}}(\B)_{\omega}$. 
By
Proposition \ref{p.growth}, it follows that $(f^k)$ is a Cauchy sequence in the space $H(\B)$,
where $H(\B)$ is equipped with the locally uniform topology.
Hence $(f^k)$ converges locally uniformly to some function
$f\in H(\B)$.

To complete the proof, we show $\|f^k-f\|_{{\mathcal{R}},\omega}\to 0$ as $k\to\infty$. For this, fix $\varepsilon>0$.
Since $(f^k)$ is a Cauchy sequence in $\mathcal{B}_{\mathcal{R}}(\B)_{\omega}$, there exists $k_0\in\mathbb{N}$
such that
$$\|f^k-f^p\|_{{\mathcal{R}},\omega}<\varepsilon \quad {\rm for} \quad k,p\geq k_0$$
which gives
$$|f^k(0)-f^p(0)|+\omega(z)|(Df^k(z)-Df^p(z))z|<\varepsilon \quad (z\in \B,\, k,p\geq k_0).$$

On the other hand, given $p\in\mathbb{N}$ and $z\in \B$, 
the locally uniform convergence of the sequence
 $(f^k)$ to $f$ implies that
$$|f(0)-f^p(0)|+\omega(z)|(Df(z)-Df^p(z))z|
\leq\varepsilon$$
for $p\geq k_0$ and $z\in \B$. Consequently,
$$\|f^p-f\|_{{\mathcal{R}},\omega}\leq \varepsilon \quad {\rm for} \quad p\geq k_0.$$
Therefore $f=(f-f^p)+f^p\in  \mathcal{B}_{\mathcal{R}}(\B)_{\omega}$
and $\lim_{p\to\infty}\|f^p-f\|_{{\mathcal{R}},\omega}=0$.
This proves that
$\mathcal{B}_{\mathcal{R}}(\B)_{\omega}$ is complete.
\end{proof}

A function $f\in H(\B)$
is said to belong to  the little Bloch-type space $\mathcal{B}_{\mathcal{R}}(\B)_{\omega,0}$
if 
\[
\lim_{\| z\|\to 1}\omega(z)|\mathcal{R}f(z)|=0
\]
holds.
Since for each $R\in (0,1)$, there exists a constant $C(R)>1$ such that
\begin{equation}
\label{eq2.10}
\sup_{0\leq r\leq R}\omega(r)\leq C(R)\omega(R),
\end{equation}
$\mathcal{B}_{\mathcal{R}}(\B)_{\omega,0}$
is a closed subspace of $\mathcal{B}_{\mathcal{R}}(\B)_{\omega}$.

For $x\in X\setminus\{0\}$, we define
$$T(x)=\{l_x\in X^*:\ l_x(x)=\|x\|,\ \|l_x\|=1\}.$$
Then $T(x)\ne \emptyset$ in view of the Hahn-Banach theorem.

Now, we generalize the test functions defined in \cite{T07} on the Euclidean unit ball of $\C^n$
to the unit ball of a complex Banach space.
These test functions will be useful in the next sections.

\begin{lemma}
\label{testfunction1}
Let $g\in H(\mathbb{D})$ be the function defined in Lemma $\ref{lem-g}$.
For each $v\in \B\setminus\{ 0\}$
and $l_v\in T(v)$, let
\[
f_v(z)=\int_0^{\| v\|l_v(z)}g(\zeta)d\zeta,
\quad
z\in \B.
\]
Then $f_v\in \mathcal{B}_{\mathcal{R}}(\B)_{\omega,0}$
and
$\| f_v\|_{{\mathcal{R}},\omega}\leq C_2$,
where $C_2$ is the constant defined in Lemma $\ref{lem-g}$.
\end{lemma}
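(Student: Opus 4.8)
The plan is to compute the radial derivative of $f_v$ in closed form and then collapse every estimate onto the real axis, where the one-variable properties of $g$ from Lemma \ref{lem-g}(i) apply. First I would check that $f_v$ is well defined and holomorphic. Since $l_v \in T(v)$ satisfies $\|l_v\| = 1$, for every $z \in \B$ we have $|l_v(z)| \le \|z\| < 1$, so $\|v\|\, l_v(z) \in \D$ and the defining integral makes sense; as $f_v$ is the composition of the holomorphic primitive $w \mapsto \int_0^w g(\zeta)\,d\zeta$ on $\D$ with the continuous linear map $z \mapsto \|v\|\, l_v(z)$, it is holomorphic on $\B$, and plainly $f_v(0) = 0$. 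By the chain rule $Df_v(z) = \|v\|\, g(\|v\| l_v(z))\, l_v$, so writing $w = w(z) = \|v\|\, l_v(z) \in \D$ gives
\[
\mathcal{R}f_v(z) = Df_v(z) z = \|v\|\, l_v(z)\, g(\|v\| l_v(z)) = w\, g(w),
\qquad |w| \le \|v\|\,\|z\| \le \|z\|.
\]

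The heart of the matter is estimating $\omega(z)\,|\mathcal{R}f_v(z)| = \omega(\|z\|)\,|w|\,|g(w)|$. Here I would use that $g(\zeta) = 1 + \sum_{k>k_0} 2^k \zeta^{n_k}$ has non-negative Taylor coefficients, whence $|g(w)| \le g(|w|)$ for $w \in \D$. Since $g$ is increasing on $[0,1)$ by Lemma \ref{lem-g}(i), the map $t \mapsto t\,g(t)$ is increasing on $[0,1)$, so from $|w| \le \|z\|$ we obtain $|w|\,|g(w)| \le |w|\,g(|w|) \le \|z\|\,g(\|z\|)$. Combining this with the bound $\omega(r) g(r) \le C_2$ from Lemma \ref{lem-g}(i) and $\|z\| < 1$ yields
\[
\omega(z)\,|\mathcal{R}f_v(z)| \le \omega(\|z\|)\,\|z\|\,g(\|z\|) \le \|z\|\, C_2 \le C_2.
\]
Taking the supremum over $z \in \B$ and recalling $f_v(0) = 0$ gives $\|f_v\|_{\mathcal{R},\omega} = \|f_v\|_{\mathcal{B}_{\mathcal{R}}(\B),\omega} \le C_2$.

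For membership in the little Bloch-type space I would exploit that $w(z)$ stays inside a fixed smaller disc: $|w| \le \|v\|\,\|z\| < \|v\| < 1$, so both $|w| \le \|v\|$ and $|g(w)| \le g(|w|) \le g(\|v\|)$ remain bounded by constants independent of $z$, while $\omega(\|z\|) \to 0$ as $\|z\| \to 1$ because $\omega$ is normal. Hence
\[
\omega(z)\,|\mathcal{R}f_v(z)| \le \omega(\|z\|)\,\|v\|\,g(\|v\|) \longrightarrow 0
\qquad (\|z\| \to 1),
\]
so that $f_v \in \mathcal{B}_{\mathcal{R}}(\B)_{\omega,0}$.

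I expect the only genuinely delicate point to be the one-variable inequality $|g(w)| \le g(|w|)$ combined with the monotonicity of $t \mapsto t\,g(t)$, which is exactly what lets the Banach-space estimate reduce to the real axis where Lemma \ref{lem-g}(i) is available; the remaining ingredients are the chain-rule computation of $\mathcal{R}f_v$ and the defining decay of normal functions, both of which are routine.
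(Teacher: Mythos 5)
Your proof is correct and follows essentially the same route as the paper's: compute $\mathcal{R}f_v(z)=\|v\|\,l_v(z)\,g(\|v\|l_v(z))$ by the chain rule, bound it via the nonnegative Taylor coefficients and monotonicity of $g$ to get $\omega(z)|\mathcal{R}f_v(z)|\leq\omega(\|z\|)g(\|z\|)\leq C_2$, and deduce membership in the little Bloch-type space from the boundedness of $\mathcal{R}f_v$ (since $\|v\|\,|l_v(z)|\leq\|v\|<1$) together with $\omega(r)\to 0$ as $r\to 1$. The only difference is that you spell out details the paper leaves implicit (well-definedness, $|g(w)|\leq g(|w|)$, monotonicity of $t\mapsto t\,g(t)$), which is fine.
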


\begin{proof}
By Lemma \ref{lem-g} (i), we have 
\[
\omega(z) |\mathcal{R}f_v(z)|=\omega(z)|g(\| v\|l_v(z))| \| v\| |l_v(z)|
\leq \omega(\| z\|)g(\| z\|)\leq C_2.
\]
Therefore, 
$f_v\in \mathcal{B}_{\mathcal{R}}(\B)_{\omega}$
and $\| f_v\|_{{\mathcal{R}},\omega}\leq C_2$.
Moreover, 
since $\mathcal{R}f_v$ is bounded on $\B$
and $\omega(z)\to 0$ as $\| z\|\to 1$,
we have
$f_v\in \mathcal{B}_{\mathcal{R}}(\B)_{\omega,0}$.
\end{proof}

\begin{lemma}
\label{testfunction2}
For each $v\in \B$ with $\| v\|\geq r_1$, let
\[
F_v(z)=\frac{1}{f_v(v)}\left( f_v(z) \right)^2,
\quad
z\in \B,
\]
where $r_1$ is the constant in Lemma $\ref{lem-g}$
and $f_v$ is the function defined in Lemma $\ref{testfunction1}$.
Then $F_v\in \mathcal{B}_{\mathcal{R}}(\B)_{\omega,0}$
and
$\| F_v\|_{{\mathcal{R}},\omega}\leq 2C_2C_3$,
where $C_2$, $C_3$ are the constants defined in Lemma $\ref{lem-g}$.
Moreover, if $\int_0^1\frac{1}{\omega(t)}dt=\infty$,
then $F_v\to 0$ uniformly on any closed ball strictly inside $\B$ as $\| v\|\to 1$.
\end{lemma}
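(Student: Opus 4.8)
The plan is to reduce everything to two real-variable facts about $g$: that it has nonnegative Taylor coefficients (so $|g(\zeta)|\le g(|\zeta|)$ and $g$ is increasing on $[0,1)$), and the integral comparison in Lemma~\ref{lem-g}(ii). First I would record the radial derivative of $F_v$. Since $\mathcal R$ is a derivation and $Df_v(z)w=g(\|v\|l_v(z))\,\|v\|l_v(w)$, we get $\mathcal R f_v(z)=g(\|v\|l_v(z))\,\|v\|l_v(z)$ and hence
\[
\mathcal R F_v(z)=\frac{2}{f_v(v)}\,f_v(z)\,\mathcal R f_v(z)
=\frac{2}{f_v(v)}\,f_v(z)\,g\!\left(\|v\|l_v(z)\right)\|v\|l_v(z).
\]
Note also that $f_v(0)=0$, so $F_v(0)=0$ and therefore $\|F_v\|_{\mathcal R,\omega}=\|F_v\|_{\mathcal B_{\mathcal R}(\B),\omega}$. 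Since $l_v(v)=\|v\|$, the denominator is the positive real number $f_v(v)=\int_0^{\|v\|^2}g(u)\,du$.

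The core of the argument is the two pointwise bounds. Writing $w=\|v\|l_v(z)$, we have $|w|=\|v\||l_v(z)|\le\|v\|\|z\|\le\|z\|$, so the nonnegativity of the coefficients of $g$ together with its monotonicity gives $|g(w)|\le g(|w|)\le g(\|z\|)$. For $f_v(z)$ I would parametrize the integration segment by $\zeta=t\|v\|l_v(z)$, $t\in[0,1]$, and substitute, obtaining
\[
|f_v(z)|\le\int_0^{|w|}g(u)\,du\le\int_0^{\|v\|}g(u)\,du\le C_3\int_0^{\|v\|^2}g(u)\,du=C_3\,f_v(v),
\]
where the last inequality is Lemma~\ref{lem-g}(ii), applicable precisely because $\|v\|\ge r_1$; this is the one place the hypothesis $\|v\|\ge r_1$ enters. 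Combining these with $\omega(\|z\|)g(\|z\|)\le C_2$ from Lemma~\ref{lem-g}(i) yields
\[
\omega(z)|\mathcal R F_v(z)|
\le\frac{2}{f_v(v)}\,|f_v(z)|\,\omega(\|z\|)g(\|z\|)\|z\|
\le\frac{2C_2}{f_v(v)}\,|f_v(z)|\le 2C_2C_3,
\]
which is the claimed norm bound. The step I expect to be the main obstacle is exactly this control of $|f_v(z)|/f_v(v)$: the argument of $g$ is complex, and the whole estimate hinges on passing from $g(w)$ to $g(|w|)$ via nonnegative coefficients and then invoking Lemma~\ref{lem-g}(ii) to compare the integral up to $\|v\|$ with the integral up to $\|v\|^2=l_v(v)\|v\|$.

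For membership in the little Bloch-type space, I would use the same bound $|f_v(z)|\le C_3 f_v(v)$ to write $\omega(z)|\mathcal R F_v(z)|\le 2C_3\,\omega(z)|\mathcal R f_v(z)|$, which tends to $0$ as $\|z\|\to1$ because $f_v\in\mathcal B_{\mathcal R}(\B)_{\omega,0}$ by Lemma~\ref{testfunction1}; hence $F_v\in\mathcal B_{\mathcal R}(\B)_{\omega,0}$. Finally, for the uniform convergence under $\int_0^1\frac{1}{\omega(t)}dt=\infty$, fix $R\in(0,1)$ and note that for $\|z\|\le R$ the numerator is bounded independently of $v$ by $|f_v(z)|\le\int_0^{R}g(u)\,du$, while $f_v(v)=\int_0^{\|v\|^2}g(u)\,du\to\infty$ as $\|v\|\to1$, since $g(u)\ge C_1/\omega(u)$ gives $\int_0^1 g=\infty$. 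Therefore
\[
|F_v(z)|=\frac{|f_v(z)|^2}{f_v(v)}\le\frac{\left(\int_0^{R}g(u)\,du\right)^2}{f_v(v)}\longrightarrow 0
\]
uniformly for $\|z\|\le R$, completing the proof.
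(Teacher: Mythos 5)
Your proof is correct and takes essentially the same route as the paper's: the same formula $\mathcal{R}F_v(z)=\frac{2}{f_v(v)}f_v(z)\,g(\|v\|l_v(z))\|v\|l_v(z)$, the same bounds $|f_v(z)|\leq\int_0^{\|v\|}g(t)\,dt\leq C_3 f_v(v)$ and $\omega(\|z\|)g(\|z\|)\leq C_2$ via Lemma \ref{lem-g}(i)--(ii), and the same divergence argument $f_v(v)\geq\int_0^{\|v\|^2}\frac{C_1}{\omega(t)}\,dt\to\infty$ for the final assertion. The only cosmetic difference is in the little Bloch-type membership, where you factor through $\omega(z)|\mathcal{R}F_v(z)|\leq 2C_3\,\omega(z)|\mathcal{R}f_v(z)|$ and cite Lemma \ref{testfunction1}, while the paper notes directly that $\mathcal{R}F_v$ is bounded on $\B$ and $\omega(z)\to 0$ as $\|z\|\to 1$; both rest on the same facts.
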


\begin{proof}
By Lemma \ref{lem-g}, we have 
\begin{align*}
\omega(z) |\mathcal{R}F_v(z)|
&=
\omega(z)\frac{2}{f_v(v)}|f_v(z)g(\| v\|l_v(z))| \| v\| |l_v(z)|
\\
&\leq
2 \frac{\int_0^{\| v\|}g(t)dt}{\int_0^{\| v\|^2}g(t)dt}\omega(\| z\|)g(\| z\|)
\\
&\leq 
2C_2C_3.
\end{align*}
Therefore, 
$F_v\in \mathcal{B}_{\mathcal{R}}(\B)_{\omega}$
and $\| F_v\|_{{\mathcal{R}},\omega}\leq 2C_2C_3$.
Moreover, 
since $\mathcal{R}F_v$ is bounded on $\B$
and $\omega(z)\to 0$ as $\| z\|\to 1$,
we have
$F_v\in \mathcal{B}_{\mathcal{R}}(\B)_{\omega,0}$.

Next, assume that $\int_0^1\frac{1}{\omega(t)}dt=\infty$.
Fix $r\in (0,1)$.
Since
\[
f_v(v)=\int_0^{\| v\|^2}g(t)dt\geq \int_0^{\| v\|^2}\frac{C_1}{\omega(t)}dt
\to \infty 
\quad
\mbox{as }
\| v\|\to 1
\]
and
\[
|f_v(z)|\leq \int_0^{r}g(t)dt,
\quad
\| z\|\leq r,
\]
$F_v(z)\to 0$ 
uniformly for $\| z\|\leq r$ as $\| v\|\to 1$.
\end{proof}

Let $f\in H(\B)$.
Then the relation $|\mathcal{R}f(z)|\leq \| Df(z)\|$ holds.
So, 
if
\[
\sup_{z\in\B}\omega(z)\| Df(z)\|<\infty
\]
holds,
then $f\in \mathcal{B}_{\mathcal{R}}(\B)_{\omega}$
and
\[
\| f\|_{{\mathcal{R}},\omega}\leq 
|f(0)|+\sup_{z\in\B}\omega(z)\| Df(z)\|
\]
holds.
In the case $\B=\B_H$ is the unit ball of a complex Hilbert space $H$, we have the following theorem,
which is a generalization of
the result on the Euclidean unit ball in $\C^n$
\cite[Theorem 2.1]{T07}
to the unit ball of a complex Hilbert space.
Note that there is a gap in the proof of \cite[Theorem 2.1]{T07},
because \cite[eq.(2.7)]{T07} cannot be obtained from
\cite[eq.(2.2)]{T07}.
To overcome this gap,
we will change the path of integration of Cauchy's integral formula.

\begin{theorem}
\label{Hilbert}
Let $\B_H$ be the unit ball of a complex Hilbert space $H$ and 
let $\omega$ be a normal function.
Let $f\in H(\B_H)$.
Then
\begin{enumerate}
\item[$({\rm i})$]
$f\in \mathcal{B}_{\mathcal{R}}(\B_H)_{\omega}$
if and only if
$\sup_{z\in\B_H}\omega(z)\| Df(z)\|<\infty$.
Moreover, 
\[
\| f\|_{{\mathcal{R}},\omega}
\simeq
|f(0)|+\sup_{z\in\B_H}\omega(z)\| Df(z)\|;
\]
\item[$({\rm ii})$]
$f\in \mathcal{B}_{\mathcal{R}}(\B_H)_{\omega,0}$
if and only if
$\lim_{\| z\| \to 1}\omega(z)\| Df(z)\|=0.$
\end{enumerate}
\end{theorem}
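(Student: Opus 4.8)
The easy implications are already in hand: since $|\mathcal{R}f(z)|\le\|Df(z)\|$, the condition $\sup_{z}\omega(z)\|Df(z)\|<\infty$ forces $f\in\mathcal{B}_{\mathcal{R}}(\B_H)_{\omega}$ with $\|f\|_{\mathcal{R},\omega}\le|f(0)|+\sup_{z}\omega(z)\|Df(z)\|$, and likewise $\lim_{\|z\|\to1}\omega(z)\|Df(z)\|=0$ gives $f\in\mathcal{B}_{\mathcal{R}}(\B_H)_{\omega,0}$. Thus in both (i) and (ii) the whole content is the reverse estimate, namely controlling $\omega(z)\|Df(z)\|$ by a supremum of $\omega|\mathcal{R}f|$. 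The plan is to isolate this as one quantitative claim: there is a constant $C>0$, depending only on $\omega$, so that $\omega(z)\|Df(z)\|\le C\|f\|_{\mathcal{R},\omega}$ for all $z$, together with the localized refinement that the right-hand side may be replaced by $\sup\{\omega(w)|\mathcal{R}f(w)|:\|w\|\ge\|z\|/2\}$ plus a term negligible as $\|z\|\to1$. Granting this, (i) follows by taking suprema and (ii) by letting $\|z\|\to1$.

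First I would carry out the Hilbert-space reduction. Fix $z\ne0$, put $s=\|z\|$, $e_1=z/\|z\|$, and let $u$ be an arbitrary unit vector; writing $u=\bar\alpha e_1+\beta v$ with $v\perp e_1$, $\|v\|=1$, I split $Df(z)u=\bar\alpha\,\mathcal{R}f(z)/s+\beta\,Df(z)v$. The radial part is immediate, so it suffices to bound the transverse derivative $Df(z)v$ uniformly over $v\perp e_1$. Restricting $f$ to the two-dimensional complex slice $V=\mathrm{span}_{\mathbb{C}}\{e_1,v\}$ and setting $F(\zeta_1,\zeta_2)=f(\zeta_1 e_1+\zeta_2 v)$, the Hilbert structure gives $\|\zeta_1 e_1+\zeta_2 v\|=\sqrt{|\zeta_1|^2+|\zeta_2|^2}$, so $\omega$ restricts to the Euclidean weight on the unit ball of $\mathbb{C}^2$ and the radial derivative of $F$ coincides with $\mathcal{R}f$ along the slice. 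The problem becomes $\omega(s)|\partial_2 F(s,0)|\le C\sup_{w\in\B_H}\omega(w)|\mathcal{R}f(w)|$. This is exactly where the Hilbert hypothesis enters: in a general Banach space the norm need not restrict to a Hilbert norm on a two-dimensional subspace, and no orthogonal complement to $z$ is available.

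Next I would recover $\partial_2 F(s,0)$ by Cauchy's formula along a modified path. Consider the holomorphic curve $w(\lambda)=(s\cos\lambda,s\sin\lambda)$ through $(s,0)$, for which $\phi(\lambda):=F(w(\lambda))$ satisfies $\phi'(0)=s\,\partial_2 F(s,0)$, and apply the Cauchy estimate $\phi'(0)=\frac{1}{2\pi i}\oint_{|\lambda|=\rho}\lambda^{-2}\phi(\lambda)\,d\lambda$. The new ingredient, replacing the path used in \cite{T07}, is to represent $\phi$ through the radial derivative rather than through values of $F$: integrating $\tfrac{d}{d\mu}F(\mu w(\lambda))=\mathcal{R}F(\mu w(\lambda))/\mu$ from the origin gives $\phi(\lambda)=F(0)+\int_0^1\mathcal{R}F(\mu w(\lambda))\mu^{-1}\,d\mu$, and since the Cauchy kernel annihilates the constant, $\omega(s)|\partial_2 F(s,0)|$ is bounded by $\tfrac{\omega(s)}{s\rho}$ times the supremum over $|\lambda|=\rho$ of $\int_0^1|\mathcal{R}F(\mu w(\lambda))|\mu^{-1}\,d\mu$. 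Using $|\mathcal{R}F(\mu w(\lambda))|\le M/\omega(\mu\|w(\lambda)\|)$ with $M=\sup_{\B_H}\omega|\mathcal{R}f|$ (and treating the harmless part near $\mu=0$ separately, where $\mathcal{R}F$ vanishes to first order and $\mathcal{R}F(\mu w)/\mu\to DF(0)w$), this reduces everything to the scalar integral $\int^1\omega(\mu\|w(\lambda)\|)^{-1}\,d\mu$.

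The final step is to balance the contour radius against $\omega$, and I expect this to be the main obstacle. Since $\|w(\lambda)\|=s\sqrt{\cosh(2\,\mathrm{Im}\,\lambda)}$, subharmonicity of $\|w(\cdot)\|^2$ forces an unavoidable second-order outward bulge, so $\rho$ must be taken of order $(1-s)^{1/2}$ to keep the contour inside $\B_H$; on the other hand $\rho$ must be large enough that $\tfrac{\omega(s)}{\rho}\int^1\omega(\mu\|w(\lambda)\|)^{-1}\,d\mu$ stays bounded. The two monotonicity conditions in \eqref{omega} — that $\omega(r)/(1-r)^a$ decreases and $\omega(r)/(1-r)^b$ increases on $[\delta,1)$ — are precisely what is needed to compare $\int_{s/2}^{s}\omega(x)^{-1}\,dx$ with $(1-s)/\omega(s)$ and thereby absorb the factor $\omega(s)/\rho$ into a constant depending only on $\omega$; keeping that constant independent of $v$, $s$ and $f$, and for the localized version arranging that only values of $\mathcal{R}f$ at norms $\ge s/2$ enter up to a term that is $o(1)$ as $s\to1$, is the delicate point. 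Once the quantitative claim is established, part (i) follows by taking the supremum over unit $u$ and over $z$ (handling $\|z\|<1/2$ directly, where $\omega$ is bounded below and $\|Df(z)\|$ is controlled by a Cauchy estimate together with Proposition \ref{p.growth}), which yields $\sup_z\omega(z)\|Df(z)\|\lesssim\|f\|_{\mathcal{R},\omega}$ and hence the asserted norm equivalence; part (ii) follows from the localized estimate by splitting $\int_0^1=\int_0^R+\int_R^1$, the inner piece being multiplied by $\omega(s)\to0$ and the outer piece being controlled by $\sup_{\|w\|\ge R}\omega(w)|\mathcal{R}f(w)|$, which tends to $0$ as $R\to1$ when $f\in\mathcal{B}_{\mathcal{R}}(\B_H)_{\omega,0}$.
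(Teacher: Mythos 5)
Your reduction to a two--dimensional Hilbert slice, the splitting of $Df(z)u$ into a radial and a transverse part, and your treatment of the easy implications, of small $\|z\|$, and of part (ii) via a localized estimate all match the paper's strategy. The gap is in the core Cauchy estimate, and it is fatal for general normal weights. You recover $s\,\partial_2F(s,0)$ as $\phi'(0)$ for $\phi(\lambda)=F(s\cos\lambda,s\sin\lambda)$ and integrate over $|\lambda|=\rho$; since $\|(s\cos\lambda,s\sin\lambda)\|=s\sqrt{\cosh(2\,\mathrm{Im}\,\lambda)}$, keeping the contour inside $\B_H$ forces $\rho\lesssim(1-s)^{1/2}$, as you yourself note. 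But then the constant your method produces in front of $M=\sup_{\B_H}\omega|\mathcal{R}f|$ is at least $c\,\omega(s)/\rho$: after inserting $|\mathcal{R}F(\mu w(\lambda))|\le M/\omega(\mu\|w(\lambda)\|)$, the remaining factor $\sup_{|\lambda|=\rho}\int_0^1\omega(\mu\|w(\lambda)\|)^{-1}\,d\mu$ is bounded \emph{below} by the positive constant $1/\sup_{[0,1)}\omega$, so no cancellation or refinement of the $\mu$--integral can remove the prefactor $\omega(s)/\rho\gtrsim\omega(s)(1-s)^{-1/2}$. This prefactor is unbounded as $s\to1$ for every normal weight decaying more slowly than $(1-s)^{1/2}$, e.g.\ $\omega(r)=(1-r)^{1/10}$ (admissible with $\delta=0$, $a=1/20$, $b=1$), i.e.\ precisely on the $\alpha$--Bloch scale with $\alpha<1/2$. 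The normality conditions do not rescue the argument: your claimed comparison of $\int_{s/2}^{s}\omega(x)^{-1}dx$ with $(1-s)/\omega(s)$ is false, since for $\omega(r)=(1-r)^{\alpha}$, $\alpha<1$, the left side tends to a positive constant while the right side tends to $0$.

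The defect is structural rather than technical: any holomorphic curve through the point $(s,0)$ pointing in the $z_2$--direction bulges outward at second order (plurisubharmonicity of $\|\cdot\|^2$), so a Cauchy integral \emph{centered at} $(s,0)$ can never have radius better than $(1-s)^{1/2}$, and the factor $1/\rho$ then kills the estimate. The paper's proof avoids ever centering a Cauchy integral at $(r,0)$, $r=|z_1|$. It instead applies the Cauchy formula in the $z_2$--variable to the already controlled function $\mathcal{R}F$ at the \emph{interior} points $(t,0)$, $0\le t<r$, along contours $|z_2|=R\sqrt{1-r^{-2}t^2}$ with $\delta<R\le r$: every point of such a contour has norm $\sqrt{t^2+R^2(1-r^{-2}t^2)}$ lying in $[R,r]$, so the monotonicity of $\omega$ on $[\delta,1)$ gives $|\mathcal{R}F|\le \sup_{R\le\|z\|<1}\omega(z)|\mathcal{R}f(z)|/\omega(r)$ there, with no loss in the weight. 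The transverse derivative at $(r,0)$ is then recovered from the identity $r\,\partial_2F(r,0)=\int_0^r\partial_2(\mathcal{R}F)(t,0)\,dt$ (see \cite[Lemma 6.4.5(2)]{R}), and the shrinking of the contour radius as $t\to r$ enters only through the integrable singularity $\int_0^r(1-r^{-2}t^2)^{-1/2}dt=\pi r/2$. This yields $\omega(r)|\partial_2F(r,0)|\le\frac{\pi}{2R}\sup_{R\le\|z\|<1}\omega(z)|\mathcal{R}f(z)|$ with a constant uniform over all normal $\omega$, and since only values of $\mathcal{R}f$ on $\{R\le\|z\|<1\}$ appear, the same inequality gives part (ii) on letting $R\to1$. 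To repair your proof you would have to replace your contour through $(s,0)$ by such a family of interior contours; no choice of $\rho$ in your scheme can work.
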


\begin{proof}
(i)
We may assume that $\dim H\geq 2$.
It suffices to show that
there exists a constant $C>0$ such that
\begin{equation}
\label{eq(I)}
\sup_{z\in\B_H}\omega(z)| Df(z)v|\leq
C\sup_{z\in\B_H}\omega(z)|\mathcal{R}f(z)|,
\quad f\in \mathcal{B}_{\mathcal{R}}(\B_H)_{\omega},
\| v\|=1.
\end{equation}
Let $z\in \B_H$ and $v \in H$ with $\| v\|=1$ be fixed.
Then there exist orthonormal unit vectors $e_1$, $e_2\in H$ and $\alpha,\beta_1,\beta_2\in \C$
with $|\alpha|<1$ and $|\beta_1|^2+|\beta_2|^2=1$
such that $z=\alpha e_1$, $v=\beta_1 e_1+\beta_2 e_2$.
For $f\in \mathcal{B}_{\mathcal{R}}(\B_H)_{\omega}$, let
\[
F(z_1,z_2)=f(z_1 e_1+z_2 e_2),
\quad
(z_1, z_2)\in \B_2,
\]
where $\B_2$ is the Euclidean unit ball in $\C^2$.
Then $F\in H(\B_2)$ and 
$\mathcal{R}F(z_1,z_2) =\mathcal{R}f(z_1 e_1+z_2 e_2)$,
$\frac{\partial F}{\partial z_1}(z_1,0)=Df(z_1 e_1)e_1$,
$\frac{\partial F}{\partial z_2}(z_1,0)=Df(z_1 e_1)e_2$
hold.
Let $R\in (\delta,1)$ be fixed.
We assume that $|z_1|\geq R$ and let $r=|z_1|$.
Since
$\delta<R\leq \sqrt{t^2+R^2(1-r^{-2}t^2)}\leq r$ for $0\leq t\leq r$
and $\omega$ is strictly decreasing on $[\delta,1)$,
we have
\[
\omega\left(\sqrt{t^2+R^2(1-r^{-2}t^2)}\right)\geq \omega(r),
\quad 0\leq t\leq r.
\]
Then, for $0\leq t< r$,
by Cauchy's integral formula,
we have
\begin{align}
\left|\frac{\partial (\mathcal{R}F)}{\partial z_2}(t,0)\right|
&=
\left|\frac{1}{2\pi i}\int_{|z_2|=R\sqrt{1-r^{^-2}t^2}}\frac{\mathcal{R}F(t,z_2)}{z_2^2}dz_2 \right|
\nonumber \\
&=
\left| \frac{1}{2\pi}\int_{|z_2|=
R\sqrt{1-r^{-2}t^2}}\frac{\mathcal{R}f(t e_1+z_2 e_2)}{z_2^2}dz_2\right|
\nonumber \\
&\leq
\frac{\max_{|z_2|=R\sqrt{1-r^{-2}t^2}}|\mathcal{R}f(t e_1+z_2 e_2)|}{R\sqrt{1-r^{-2}t^2}}
\nonumber \\
&\leq
\frac{\sup_{R\leq \| z\|<1}\omega(z)|\mathcal{R}f(z)|}{\omega(r)R\sqrt{1-r^{-2}t^2}}.
\label{eq2.6}
\end{align}
Therefore, for $|z_1|=r\geq R$, 
by \cite[Lemma 6.4.5(2)]{R} and \eqref{eq2.6},
we have
\begin{align*}
|z_1|\left|\frac{\partial F}{\partial z_2}(z_1,0)\right|
&=
\left| \int_0^{r}\frac{\partial (\mathcal{R}F)}{\partial z_2}(t,0)dt \right|
\\
&\leq
\int_0^{r} \frac{\sup_{R\leq \| z\|<1}\omega(z)|\mathcal{R}f(z)|}{\omega(r)R\sqrt{1-r^{-2}t^2}} dt
\\
&=
\frac{\sup_{R\leq \| z\|<1}\omega(z)|\mathcal{R}f(z)|}{\omega(r)R}
\int_0^{r} \frac{1}{\sqrt{1-r^{-2}t^2}} dt
\\
&=
\frac{\pi}{2\omega(|z_1|)R}|z_1|\sup_{R\leq \| z\|<1}\omega(z)|\mathcal{R}f(z)|.
\end{align*}
Thus, we have
\begin{equation}
\label{eq2.7}
\left|\frac{\partial F}{\partial z_2}(z_1,0)\right|
\leq
\frac{\pi}{2\omega(|z_1|)\delta}\sup_{R\leq \| z\|<1}\omega(z)|\mathcal{R}f(z)|,
\quad
|z_1|\geq  R.
\end{equation}
Also, we have
\begin{equation}
\label{eq2.8}
\left|\frac{\partial F}{\partial z_1}(z_1,0)\right|
=
\left|\frac{\mathcal{R}f(z_1 e_1)}{z_1}\right|
\leq
\frac{1}{\delta \omega(|z_1|)}\sup_{R\leq \| z\|<1}\omega(z)|\mathcal{R}f(z)|,
\quad
|z_1|\geq R.
\end{equation}
From \eqref{eq2.7} and \eqref{eq2.8}, we have
\begin{align}
\omega(z)|Df(z)v|
&=
\omega(\alpha)\left|Df(\alpha e_1)({\beta_1} e_1+\beta_2 e_2)\right|
\nonumber \\
&=
\omega(\alpha)\left|
\beta_1\frac{\partial F}{\partial z_1}(\alpha,0)+\beta_2 \frac{\partial F}{\partial z_2}(\alpha,0)
\right|
\nonumber \\
&\leq 
\omega(\alpha)\left(
\left|\frac{\partial F}{\partial z_1}(\alpha,0)\right|^2+
\left|\frac{\partial F}{\partial z_2}(\alpha,0)\right|^2
\right)^{1/2}
\nonumber \\
&\leq 
\frac{\pi}{\sqrt{2}\delta}\sup_{R\leq \| z\|<1}\omega(z)|\mathcal{R}f(z)|,
\quad 
\| z\|\geq  R, \| v\|=1.
\label{little}
\end{align}
Since $Df(z)v$ is a holomorphic function in $z\in \B$,
by \eqref{eq2.10}, \eqref{little} and the maximum principle
for holomorphic functions,
we have
\[
\omega(z)|Df(z)v|\leq\frac{\pi}{\sqrt{2}\delta}C(R)\sup_{z\in \B}\omega(z)|\mathcal{R}f(z)|,
\quad 
z\in\B, \| v\|=1.
\]
This implies \eqref{eq(I)}.

(ii)
It suffices to show that $f\in \mathcal{B}_{\mathcal{R}}(\B_H)_{\omega,0}$ implies
\begin{equation}
\label{eq(II)}
\lim_{\| z\| \to 1}\omega(z)\| Df(z)\|=0.
\end{equation}
Assume that the condition
$\lim_{\| z\|\to 1}\omega(z)|\mathcal{R}f(z)|=0$
holds.
Then 
for any $\varepsilon>0$,
there exists $R\in (\delta,1)$ such that
\[
\omega(z)|\mathcal{R}f(z)|<\varepsilon,
\quad \| z\|>R_.
\]
Therefore, by using \eqref{little}, we obtain \eqref{eq(II)}.
This completes the proof.
\end{proof}

\section{Boundedness of extended Ces\`{a}ro operators}
\label{section-bounded}
\setcounter{equation}{0}
Given $\varphi \in H(\B)$,
the extended Ces\`{a}ro operator $T_{\varphi}$ is defined by
\[
T_{\varphi}f(z)=\int_0^1f(tz)\mathcal{R}\varphi(tz)\frac{1}{t}dt,
\quad
f\in H(\B), z\in \B.
\] 

The following lemma is a generalization of
the result on the Euclidean unit ball in $\C^n$
\cite[Lemma 2.1]{S05}
to the unit ball of a complex Banach space.

\begin{lemma}
\label{Taylor}
For every $f, \varphi \in H(\B)$, it holds that
\[
\mathcal{R}[T_{\varphi}f](z)=f(z)\mathcal{R}\varphi(z).
\]
\end{lemma}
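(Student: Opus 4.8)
The plan is to reduce the computation of the radial derivative of $T_{\varphi}f$ to a one-variable argument along the complex line through $z$, and then to recognize the integral defining $T_{\varphi}f$ as a primitive whose derivative can be read off from the fundamental theorem of calculus. The starting observation is the elementary identity $\mathcal{R}h(w)=Dh(w)w$ together with the chain rule $\frac{d}{d\lambda}h(\lambda z)=Dh(\lambda z)z$ valid for holomorphic $h$ and fixed $z\in\B$; combining these gives $\frac{d}{d\lambda}h(\lambda z)=\frac{1}{\lambda}\mathcal{R}h(\lambda z)$, and at $\lambda=1$ it gives $\mathcal{R}h(z)=\frac{d}{d\lambda}h(\lambda z)\big|_{\lambda=1}$. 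Applying this with $h=T_{\varphi}f$ shows that it suffices to differentiate the scalar holomorphic function $g(\lambda):=T_{\varphi}f(\lambda z)$ at $\lambda=1$, where $\lambda$ ranges over a neighbourhood of $[0,1]$ in $\C$ with $\lambda z\in\B$.

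First I would rewrite $g$. By definition $g(\lambda)=\int_0^1 f(t\lambda z)\mathcal{R}\varphi(t\lambda z)\frac{1}{t}\,dt$, and the substitution $u=t\lambda$ turns this into $g(\lambda)=\int_0^{\lambda} f(uz)\mathcal{R}\varphi(uz)\frac{1}{u}\,du$, the integral being taken along the segment from $0$ to $\lambda$. The decisive point is that the apparent singularity at $u=0$ is removable: since $\mathcal{R}\varphi(uz)=D\varphi(uz)(uz)=u\,D\varphi(uz)z$, the integrand equals $f(uz)D\varphi(uz)z$, which is holomorphic in $u$ near $0$ and in particular continuous up to $u=0$. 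Hence $g$ is a genuine holomorphic primitive of the holomorphic integrand $u\mapsto f(uz)\mathcal{R}\varphi(uz)\frac{1}{u}$; the same remark shows $T_{\varphi}f\in H(\B)$, so that $\mathcal{R}[T_{\varphi}f]$ is well defined.

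Next I would apply the fundamental theorem of calculus to obtain $g'(\lambda)=f(\lambda z)\mathcal{R}\varphi(\lambda z)\frac{1}{\lambda}$, and then evaluate at $\lambda=1$. Since $\frac{d}{d\lambda}g(\lambda)\big|_{\lambda=1}=D[T_{\varphi}f](z)z=\mathcal{R}[T_{\varphi}f](z)$, this yields $\mathcal{R}[T_{\varphi}f](z)=g'(1)=f(z)\mathcal{R}\varphi(z)$. As $z\in\B$ was arbitrary, the lemma follows.

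The only genuine care needed is the justification that the integrand extends holomorphically across $u=0$, so that both the substitution and the differentiation are legitimate; once the factor $1/u$ is absorbed by the first-order vanishing of $\mathcal{R}\varphi(uz)$, everything reduces to a routine one-variable computation. I expect this regularity bookkeeping, rather than any estimate, to be the main (indeed the only) obstacle, and it remains equally harmless in the infinite-dimensional Banach setting, because all the maps involved are restrictions of holomorphic maps to the complex line $\C z$.
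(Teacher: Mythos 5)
Your proof is correct, but it takes a genuinely different route from the paper. The paper expands $f\mathcal{R}\varphi$ into its homogeneous Taylor series $\sum_{n\geq 1}P_n$ (starting at $n=1$ because $\mathcal{R}\varphi$ vanishes at the origin), integrates term by term to get $T_{\varphi}f=\sum_{n\geq 1}P_n/n$, and then applies Euler's identity $\mathcal{R}P_n=nP_n$ for homogeneous polynomials. You avoid series entirely: the substitution $u=t\lambda$ rewrites $T_{\varphi}f(\lambda z)$ as a contour-integral primitive $\int_{[0,\lambda]}f(uz)D\varphi(uz)z\,du$ of a function holomorphic on the disc $|u|<1/\|z\|$, and the fundamental theorem of calculus plus the chain-rule identity $\mathcal{R}h(z)=\frac{d}{d\lambda}h(\lambda z)\big|_{\lambda=1}$ finishes the computation. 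Your version buys two things: it sidesteps the interchanges of $\mathcal{R}$ and $\int_0^1$ with an infinite sum, which the paper performs without comment, and it makes explicit the removable singularity at $u=0$ via $\mathcal{R}\varphi(uz)=u\,D\varphi(uz)z$ --- in the paper this point is hidden in the fact that the series starts at $n=1$. The paper's version, in exchange, is shorter once one grants the standard facts about homogeneous expansions, and displays the Euler-operator structure that motivates the definition of $T_{\varphi}$. One small caveat: your parenthetical claim that ``the same remark shows $T_{\varphi}f\in H(\B)$'' is quicker than it should be; holomorphy of the integrand along each line $\C z$ gives G\^ateaux holomorphy, but Fr\'echet holomorphy of $T_{\varphi}f$ additionally needs local boundedness, which one gets by covering the compact segment $\{tz_0:t\in[0,1]\}$ by finitely many balls on which $f$ and $D\varphi$ are bounded. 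Since the paper's own proof presupposes exactly the same well-definedness of $\mathcal{R}[T_{\varphi}f]$ without justification, this is a gloss shared by both arguments, not a gap particular to yours.
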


\begin{proof}
$f\mathcal{R}\varphi \in H(\B)$ has
the Taylor series $f(z)\mathcal{R}\varphi(z)=\sum_{n=1}^{\infty} P_n(z)$, where $P_n$ is a homogeneous polynomial
 of degree $n$.
Then we have
\begin{align*}
\mathcal{R}[T_{\varphi}f](z)
&=
\mathcal{R}\int_0^1\sum_{n=1}^{\infty} P_n(z)t^n\frac{1}{t}dt
\\
&=
\mathcal{R}\sum_{n=1}^{\infty} \frac{P_n(z)}{n}
\\
&=
\sum_{n=1}^{\infty} P_n(z)
\\
&=
f(z)\mathcal{R}\varphi(z).
\end{align*}
\end{proof}

Tang \cite[Theorems 3.1 and 3.2]{T07} obtained the following theorems when 
$\B$ is the Euclidean unit ball of $\mathbb{C}^n$.
The following theorems are generalization
to the unit ball of a complex Banach space.
Note that the proof in \cite[Theorem 3.1]{T07}
has a gap, because 
\eqref{g-mistake}
is used in it.
For non-negative constants $A_{\lambda}$ and $B_{\lambda}$
with a parameter $\lambda$,
the expression $A_{\lambda}\simeq B_{\lambda}$ means
that there exists a constant $C>0$ which is independent of $\lambda$
such that
$C^{-1}A_{\lambda}\leq B_{\lambda} \leq CA_{\lambda}$.

\begin{theorem}
\label{bounded}
Let $\omega$ and $\mu$ be normal functions.
Let $\varphi\in H(\B)$.
Then $T_{\varphi}: \mathcal{B}_{\mathcal{R}}(\B)_{\omega}
\to \mathcal{B}_{\mathcal{R}}(\B)_{\mu}$
is bounded if and only if 
\begin{equation}
\label{eq-bounded}
\sup_{z\in \B}\mu(z)|\mathcal{R}\varphi(z)|\int_0^{\| z\|}
\frac{1}{\omega(t)}dt<\infty.
\end{equation}
Moreover, if $T_{\varphi}: \mathcal{B}_{\mathcal{R}}(\B)_{\omega}
\to \mathcal{B}_{\mathcal{R}}(\B)_{\mu}$
is bounded,
then
\begin{equation}
\label{eq-bound2}
\| T_{\varphi}\|\simeq\sup_{z\in \B}\mu(z)|\mathcal{R}\varphi(z)|\int_0^{\| z\|}
\frac{1}{\omega(t)}dt.
\end{equation}
\end{theorem}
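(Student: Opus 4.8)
The plan is to prove the two implications separately and then read off the norm comparison \eqref{eq-bound2} by keeping track of the constants, which throughout must be independent of $\varphi$ and $f$. Write $M=\sup_{z\in\B}\mu(z)|\mathcal{R}\varphi(z)|\int_0^{\|z\|}\frac{1}{\omega(t)}dt$ and abbreviate $I(z)=\int_0^{\|z\|}\frac{1}{\omega(t)}dt$. The basic identity is Lemma \ref{Taylor}, which gives $\mathcal{R}[T_\varphi f](z)=f(z)\mathcal{R}\varphi(z)$; moreover $\mathcal{R}\varphi(0)=0$ forces $T_\varphi f(0)=0$, so that $\|T_\varphi f\|_{\mathcal{R},\mu}=\sup_{z\in\B}\mu(z)|f(z)|\,|\mathcal{R}\varphi(z)|$. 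Everything reduces to estimating this last supremum.

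For sufficiency, assume $M<\infty$ and estimate $|f(z)|$ by Proposition \ref{p.growth}, so that
\[
\mu(z)|f(z)|\,|\mathcal{R}\varphi(z)|\le C_4\|f\|_{\mathcal{R},\omega}\bigl(\mu(z)|\mathcal{R}\varphi(z)|+\mu(z)|\mathcal{R}\varphi(z)|I(z)\bigr).
\]
The second summand is at most $C_4\|f\|_{\mathcal{R},\omega}M$. For the first I must bound $\sup_{z}\mu(z)|\mathcal{R}\varphi(z)|$ by a constant multiple of $M$. Fix once and for all a radius $s_0\in(0,1)$. For $\|z\|\ge s_0$ one has $I(z)\ge I(s_0)>0$, hence $\mu(z)|\mathcal{R}\varphi(z)|\le M/I(s_0)$. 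For $\|z\|\le s_0$ I use that, since $\mu$ and $I$ depend only on $\|w\|$, the definition of $M$ already forces $|\mathcal{R}\varphi(w)|\le M/(\mu(s_0)I(s_0))$ on the sphere $\|w\|=s_0$; applying the maximum modulus principle along complex lines $\lambda\mapsto\mathcal{R}\varphi(\lambda s_0u)$ transfers this to the whole ball $\|z\|\le s_0$, and multiplying by $\max_{[0,s_0]}\mu$ controls $\mu(z)|\mathcal{R}\varphi(z)|$ there. Combining the two ranges yields $\sup_z\mu(z)|\mathcal{R}\varphi(z)|\le C M$ with $C$ independent of $\varphi$, whence $\|T_\varphi\|\le C'M$.

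For necessity, assume $T_\varphi$ bounded and feed it the test functions $f_v$ of Lemma \ref{testfunction1}, which satisfy $\|f_v\|_{\mathcal{R},\omega}\le C_2$. Evaluating $\mathcal{R}[T_\varphi f_v]$ at $z=v$ and using $f_v(v)=\int_0^{\|v\|^2}g(t)\,dt$ gives, for $\|v\|\ge r_1$,
\[
\mu(v)|\mathcal{R}\varphi(v)|\int_0^{\|v\|^2}g(t)\,dt=\mu(v)|\mathcal{R}[T_\varphi f_v](v)|\le \|T_\varphi f_v\|_{\mathcal{R},\mu}\le C_2\|T_\varphi\|.
\]
Lemma \ref{lem-g} then converts this into the desired quantity: part (i) gives $g\ge C_1/\omega$, so $\int_0^{\|v\|}\frac{1}{\omega}\le C_1^{-1}\int_0^{\|v\|}g$, while part (ii) gives $\int_0^{\|v\|}g\le C_3\int_0^{\|v\|^2}g$; together $\int_0^{\|v\|^2}g\ge (C_1/C_3)I(v)$. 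Hence $\mu(v)|\mathcal{R}\varphi(v)|I(v)\le (C_2C_3/C_1)\|T_\varphi\|$ for $\|v\|\ge r_1$. For $\|v\|<r_1$ I instead test against the constant function $1$ (with $\|1\|_{\mathcal{R},\omega}=1$), which by Lemma \ref{Taylor} yields $\sup_z\mu(z)|\mathcal{R}\varphi(z)|\le\|T_\varphi\|$, so there $\mu(v)|\mathcal{R}\varphi(v)|I(v)\le I(r_1)\|T_\varphi\|$. Taking the supremum over all $v$ proves \eqref{eq-bounded} together with $M\le C''\|T_\varphi\|$, and combining with the sufficiency inequality $\|T_\varphi\|\le C'M$ gives \eqref{eq-bound2}.

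I expect the main obstacle to be exactly the point where \cite{T07} has the gap flagged after Lemma \ref{lem-g}: one cannot control $\mu(z)|\mathcal{R}\varphi(z)|$ near the origin through $I(z)$, since $I(z)\to0$ as $z\to0$ while $g(0)=1$ makes the claimed inequality \eqref{g-mistake} fail for small $\|z\|$. The repair—bounding $\mathcal{R}\varphi$ on a fixed interior ball $\{\|z\|\le s_0\}$ from its sphere values via the maximum modulus principle in the sufficiency part, and separately treating $\|v\|<r_1$ through the constant test function in the necessity part—is precisely what keeps every constant independent of $\varphi$ and therefore upgrades plain boundedness to the norm equivalence \eqref{eq-bound2}.
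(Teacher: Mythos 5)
Your proof is correct, and its necessity half is essentially the paper's: you feed in the test functions $f_v$ of Lemma \ref{testfunction1} for $\|v\|\ge r_1$, convert $\int_0^{\|v\|^2}g$ back to $\int_0^{\|v\|}\frac{1}{\omega}$ via Lemma \ref{lem-g}(i)--(ii), and handle $\|v\|<r_1$ by testing against the constant function $1$ -- exactly the paper's two-case argument, with $I(r_1)$ in place of the paper's $1/C_1$. The sufficiency half, however, takes a genuinely different route at the one delicate point, namely the region near the origin where $I(z)=\int_0^{\|z\|}\frac{1}{\omega(t)}dt$ is small. The paper chooses a radius $\eta$ \emph{depending on $\varphi$} (so that $|\mathcal{R}\varphi(z)|\le 1$ for $\|z\|\le\eta$) and then introduces constants $C_5,C_6,C_7$ built from that $\eta$; as written, these constants are not visibly uniform in $\varphi$, which is an issue for \eqref{eq-bound2}, since the paper's convention for $\simeq$ requires the comparison constant to be independent of the parameter $\varphi$. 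You instead fix a radius $s_0$ once and for all, observe that the definition of $M$ controls $|\mathcal{R}\varphi|$ on the sphere $\|w\|=s_0$ by $M/(\mu(s_0)I(s_0))$, and propagate this bound to the whole ball $\|z\|\le s_0$ by the maximum modulus principle applied to $\lambda\mapsto\mathcal{R}\varphi(\lambda s_0 u)$ (legitimate in any Banach space, since this is holomorphic on $|\lambda|<1/s_0$). This yields $\sup_{z\in\B}\mu(z)|\mathcal{R}\varphi(z)|\le CM$ with $C$ depending only on $\mu,\omega,s_0$, hence $\|T_\varphi\|\le C'M$ with $C'$ manifestly independent of $\varphi$. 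So what your variant buys is precisely the uniformity needed to legitimately assert the two-sided norm equivalence \eqref{eq-bound2}; the paper's route is slightly shorter but leaves that uniformity implicit (one can recover it by the same maximum-principle observation you use, but the paper does not spell it out). Both arguments share the same skeleton -- Lemma \ref{Taylor} plus Proposition \ref{p.growth} for the upper bound, and the test functions for the lower bound -- and both correctly sidestep the flawed inequality \eqref{g-mistake} of \cite{T07} by never invoking it below the radius $r_1$.
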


\begin{proof}
Assume that \eqref{eq-bounded} holds.
Let $\eta\in (0,1)$ be such that
$|\mathcal{R}\varphi(z)|\leq 1$  for $\| z\|\leq \eta$.
There exists $C_5>0$ such that
\begin{equation}
\label{eq3.3}
1\leq C_5\int_0^{\eta}\frac{1}{\omega(t)}dt.
\end{equation}
Then, there exists $C_6>0$ such that
\begin{equation}
\label{eq3.4}
\sup_{\| z\|\leq \eta}\mu(z)|\mathcal{R}\varphi(z)|
\left(
1+\int_0^{\| z\|}
\frac{1}{\omega(t)}dt
\right)
\leq
C_6
\sup_{\| z\|\geq \eta}\mu(z)|\mathcal{R}\varphi(z)|
\int_0^{\| z\|}
\frac{1}{\omega(t)}dt.
\end{equation}
Let $C_7=\max\{ C_5+1, C_6\}$.
Then, by Proposition \ref{p.growth}, Lemma \ref{Taylor},
\eqref{eq3.3} and \eqref{eq3.4},
we have
\begin{align*}
\mu(z)|\mathcal{R}(T_{\varphi}f)(z)|
&=
\mu(z)|f(z)||\mathcal{R}\varphi(z)|
\\
&\leq
C_4\mu(z)|\mathcal{R}\varphi(z)|
\left(
1+\int_0^{\| z\|}
\frac{1}{\omega(t)}dt
\right)
\| f\|_{{\mathcal{R}},\omega}
\\
&\leq
C_4C_7\| f\|_{{\mathcal{R}},\omega}
\sup_{\| z\|\geq \eta}\mu(z)|\mathcal{R}\varphi(z)|
\int_0^{\| z\|}
\frac{1}{\omega(t)}dt
\\
&\leq 
C_4C_7\| f\|_{{\mathcal{R}},\omega}
\sup_{z\in\B}\mu(z)|\mathcal{R}\varphi(z)|
\int_0^{\| z\|}
\frac{1}{\omega(t)}dt
\end{align*}
for $f\in \mathcal{B}_{\mathcal{R}}(\B)_{\omega}$ and $z\in \B$.
Since $(T_{\varphi}f)(0)=0$, we obtain
that $T_{\varphi}: \mathcal{B}_{\mathcal{R}}(\B)_{\omega}
\to \mathcal{B}_{\mathcal{R}}(\B)_{\mu}$
is bounded and
\begin{equation}
\label{bound1}
\| T_{\varphi}\|
\leq
C_4C_7
\sup_{z\in\B}\mu(z)|\mathcal{R}\varphi(z)|
\int_0^{\| z\|}
\frac{1}{\omega(t)}dt.
\end{equation}

Conversely, assume that 
$T_{\varphi}: \mathcal{B}_{\mathcal{R}}(\B)_{\omega}
\to \mathcal{B}_{\mathcal{R}}(\B)_{\mu}$
is bounded.
Then $\varphi(z)=\varphi(0)+\int_0^1 \mathcal{R}\varphi(tz)\frac{1}{t}dt
=\varphi(0)+(T_{\varphi}1)(z)\in  \mathcal{B}_{\mathcal{R}}(\B)_{\mu}$.
Let $v\in \B\setminus \{ 0\}$ be fixed
and let $f_v\in \mathcal{B}_{\mathcal{R}}(\B)_{\omega,0}$
be the function defined in Lemma \ref{testfunction1}.
Let $r_1$ be the constant in Lemma \ref{lem-g}.
If $\| v\|\geq r_1$, by Lemmas \ref{lem-g} and \ref{testfunction1},
we have
\begin{align}
\mu(v)|\mathcal{R}\varphi(v)|\int_0^{\| v\|}\frac{1}{\omega(t)}dt
&\leq
\mu(v)|\mathcal{R}\varphi(v)|\int_0^{\| v\|}\frac{g(t)}{C_1}dt
\nonumber \\
&\leq 
\frac{C_3}{C_1}\mu(v)|\mathcal{R}\varphi(v)|\int_0^{\| v\|^2}{g(t)}dt
\nonumber \\
&\leq 
\frac{C_3}{C_1}\| T_{\varphi}f_v\|_{\mathcal{R}, \mu}
\nonumber \\
&\leq 
\frac{C_2C_3}{C_1}\| T_{\varphi}\|<\infty.
\label{bound2}
\end{align}
If $\| v\|<r_1$, then by Lemma \ref{lem-g}, we have
\begin{align}
\mu(v)|\mathcal{R}\varphi(v)|\int_0^{\| v\|}\frac{1}{\omega(t)}dt
&\leq
\mu(v)|\mathcal{R}\varphi(v)|\int_0^{\| v\|}\frac{g(t)}{C_1}dt
\nonumber \\
&\leq 
\frac{1}{C_1}\mu(v)|\mathcal{R}\varphi(v)|
\nonumber \\
&\leq 
\frac{1}{C_1}\| T_{\varphi}1\|_{\mathcal{R}, \mu}
\nonumber \\
&\leq 
\frac{1}{C_1}\| T_{\varphi}\|<\infty.
\label{bound30}
\end{align}
The inequalities \eqref{bound2} and \eqref{bound30} yield \eqref{eq-bounded},
as desired.

Moreover, from \eqref{bound1}, \eqref{bound2} and  \eqref{bound30},
we obtain \eqref{eq-bound2}.
This completes the proof.
\end{proof}

\begin{theorem}
\label{bounded-little}
Let $\omega$ and $\mu$ be normal functions.
Let $\varphi \in H(\B)$.
Then $T_{\varphi}: \mathcal{B}_{\mathcal{R}}(\B)_{\omega,0}
\to \mathcal{B}_{\mathcal{R}}(\B)_{\mu,0}$
is bounded if and only if 
$\varphi \in \mathcal{B}_{\mathcal{R}}(\B)_{\mu,0}$ and
\begin{equation}
\label{boundedcondition}
\sup_{z\in \B}\mu(z)|\mathcal{R}\varphi(z)|\int_0^{\| z\|}
\frac{1}{\omega(t)}dt<\infty.
\end{equation}
\end{theorem}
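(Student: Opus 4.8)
The plan is to treat this as a little-Bloch refinement of Theorem \ref{bounded}, noting first that the analytic condition \eqref{boundedcondition} is literally the same as \eqref{eq-bounded}. Since the little-Bloch spaces are closed subspaces of the corresponding Bloch-type spaces carrying the same norm, the only genuinely new points compared with Theorem \ref{bounded} are (a) the extra requirement $\varphi\in\mathcal{B}_{\mathcal{R}}(\B)_{\mu,0}$, and (b) the verification that $T_{\varphi}$ actually sends the little-Bloch space \emph{into} the little-Bloch space. I would organize the argument as two implications.

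For necessity, suppose $T_{\varphi}$ is bounded between the little-Bloch spaces. The constant function $1$ lies in $\mathcal{B}_{\mathcal{R}}(\B)_{\omega,0}$ (its radial derivative vanishes), so $T_{\varphi}1\in\mathcal{B}_{\mathcal{R}}(\B)_{\mu,0}$; by Lemma \ref{Taylor}, $\mathcal{R}[T_{\varphi}1]=\mathcal{R}\varphi$, whence $\lim_{\|z\|\to1}\mu(z)|\mathcal{R}\varphi(z)|=0$ and $\varphi=\varphi(0)+T_{\varphi}1\in\mathcal{B}_{\mathcal{R}}(\B)_{\mu,0}$. To obtain \eqref{boundedcondition} I would reuse the test functions $f_v$ of Lemma \ref{testfunction1}, which belong to $\mathcal{B}_{\mathcal{R}}(\B)_{\omega,0}$ with $\|f_v\|_{\mathcal{R},\omega}\le C_2$, and repeat verbatim the estimate from the proof of Theorem \ref{bounded}: for $\|v\|\ge r_1$, combining Lemma \ref{lem-g} with $f_v(v)=\int_0^{\|v\|^2}g$ and $\mathcal{R}[T_{\varphi}f_v](v)=f_v(v)\mathcal{R}\varphi(v)$ yields $\mu(v)|\mathcal{R}\varphi(v)|\int_0^{\|v\|}\tfrac{1}{\omega(t)}\,dt\le \tfrac{C_2C_3}{C_1}\|T_{\varphi}\|$, while for $\|v\|<r_1$ testing against $1$ gives the bound $\tfrac{1}{C_1}\|T_{\varphi}\|$. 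This is legitimate precisely because the little-Bloch norm is the restriction of the Bloch-type norm.

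For sufficiency, assume $\varphi\in\mathcal{B}_{\mathcal{R}}(\B)_{\mu,0}$ and \eqref{boundedcondition}. By Theorem \ref{bounded}, $T_{\varphi}\colon\mathcal{B}_{\mathcal{R}}(\B)_{\omega}\to\mathcal{B}_{\mathcal{R}}(\B)_{\mu}$ is bounded, so it remains only to show $T_{\varphi}f\in\mathcal{B}_{\mathcal{R}}(\B)_{\mu,0}$ for $f\in\mathcal{B}_{\mathcal{R}}(\B)_{\omega,0}$; boundedness of the restriction then follows automatically. Using Lemma \ref{Taylor}, this amounts to proving $\mu(z)|f(z)||\mathcal{R}\varphi(z)|\to0$ as $\|z\|\to1$. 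Given $\varepsilon>0$, I would pick $\rho\in(\tfrac12,1)$ with $\omega(z)|\mathcal{R}f(z)|<\varepsilon$ for $\|z\|>\rho$, split the radial representation $f(z)=f(\rho z/\|z\|)+\int_{\rho/\|z\|}^{1}\tfrac{\mathcal{R}f(tz)}{t}\,dt$, bound the first term by a constant $M_{\rho}$ coming from Proposition \ref{p.growth}, and bound the tail, after the substitution $s=t\|z\|$, by $\tfrac{\varepsilon}{\rho}\int_0^{\|z\|}\tfrac{1}{\omega(s)}\,ds$. Multiplying through by $\mu(z)|\mathcal{R}\varphi(z)|$, the first piece tends to $0$ because $\varphi\in\mathcal{B}_{\mathcal{R}}(\B)_{\mu,0}$, and the second is at most $\tfrac{\varepsilon}{\rho}$ times the finite supremum in \eqref{boundedcondition}; letting $\varepsilon\to0$ gives the claim.

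I expect the sufficiency direction, specifically step (b), to be the main obstacle. The difficulty is that the pointwise size of $f(z)$ is only controlled through the possibly divergent integral $\int_0^{\|z\|}\tfrac{1}{\omega(t)}\,dt$, so one cannot deduce the little-oh decay from the decay of $\mu(z)|\mathcal{R}\varphi(z)|$ alone: the decay of $\omega(z)|\mathcal{R}f(z)|$ must be fed in as well, and the two effects are reconciled through the $\varepsilon$-splitting of the radial integral together with the uniform constant supplied by \eqref{boundedcondition}. Care is needed to ensure the estimate is uniform as $\|z\|\to1$ and that $\rho$ stays bounded away from $0$, so that the factor $\tfrac{1}{\rho}$ does not spoil the limit.
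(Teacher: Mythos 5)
Your proposal is correct and follows essentially the same route as the paper: necessity via $\varphi=\varphi(0)+T_{\varphi}1$ together with the test functions $f_v$ of Lemma \ref{testfunction1} (reusing the estimates from Theorem \ref{bounded}), and sufficiency via Theorem \ref{bounded} plus the radial splitting $f(z)=f(\rho z/\|z\|)+\int_{\rho/\|z\|}^{1}\mathcal{R}f(tz)\,dt/t$ with the $\varepsilon$-smallness of $\omega|\mathcal{R}f|$ near the boundary and the bounded supremum \eqref{boundedcondition}. The only difference is cosmetic: the paper defers the final limsup estimate to the proof of \cite[Theorem 3.2]{T07}, whereas you carry it out explicitly, correctly noting that $\rho>\tfrac12$ keeps the factor $1/\rho$ harmless.
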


\begin{proof}
Assume that
$\varphi \in \mathcal{B}_{\mathcal{R}}(\B)_{\mu,0}$ and
\[
M=\sup_{z\in \B}\mu(z)|\mathcal{R}\varphi(z)|\int_0^{\| z\|}
\frac{1}{\omega(t)}dt<\infty.
\]
Then $T_{\varphi}: \mathcal{B}_{\mathcal{R}}(\B)_{\omega}
\to \mathcal{B}_{\mathcal{R}}(\B)_{\mu}$
is bounded by Theorem \ref{bounded}.
Therefore,
it suffices to show that
$T_{\varphi}(f)\in \mathcal{B}_{\mathcal{R}}(\B)_{\mu,0}$
for any $f\in \mathcal{B}_{\mathcal{R}}(\B)_{\omega,0}$.
To this end, let $f\in \mathcal{B}_{\mathcal{R}}(\B)_{\omega,0}$
be arbitrarily fixed.
Let $\varepsilon>0$ be fixed.
Then there exists $r_0\in (1/2,1)$ such that
\begin{equation}
\label{eq3.9}
\omega(z)|\mathcal{R}f(z)|<\frac{\varepsilon}{4M},
\quad r_0\leq \| z\|<1.
\end{equation}
For any $z\in \B$ with $r_0<\| z\|<1$,
let $\hat{z}=r_0z/\| z\|$.
Then, by \eqref{eq3.9}, we have
\begin{align*}
|f(z)-f(\hat{z})|
&=
\left| \int_{r_0/\| z\|}^{1}\frac{\mathcal{R}f(tz)}{t}dt\right|
\\
&\leq
\frac{\| z\|}{r_0}\int_{r_0/\| z\|}^{1}\left|{\mathcal{R}f(tz)}\right|dt
\\
&\leq
\frac{\varepsilon \| z\|}{4Mr_0}\int_{r_0/\| z\|}^{1}\frac{1}{\omega(t\| z\|)}dt
\\
&\leq
\frac{\varepsilon }{2M}\int_{r_0}^{\| z\|}\frac{1}{\omega(t)}dt.
\end{align*}
Set $K=\sup_{\| z\|\leq r_0}|f(z)|$.
By Proposition \ref{p.growth}, $K<\infty$.
Then as in the proof of \cite[Theorem 3.2]{T07},
we have $T_{\varphi}f\in \mathcal{B}_{\mathcal{R}}(\B)_{\mu,0}$.

Conversely, assume that
$T_{\varphi}: \mathcal{B}_{\mathcal{R}}(\B)_{\omega,0}
\to \mathcal{B}_{\mathcal{R}}(\B)_{\mu,0}$
is bounded.
Since 
\[
\varphi(z)=\varphi(0)+\int_0^1\mathcal{R}\varphi(tz)\frac{1}{t}dt
=\varphi(0)+(T_{\varphi}1)(z),
\]
$\varphi \in \mathcal{B}_{\mathcal{R}}(\B)_{\mu,0}$.
Since the function $f_v$ defined in Lemma \ref{testfunction1}
belongs to  $\mathcal{B}_{\mathcal{R}}(\B)_{\omega,0}$,
we obtain \eqref{boundedcondition} by the proof of Theorem \ref{bounded}.
\end{proof}

From Theorems \ref{bounded} and \ref{bounded-little},
we obtain the following corollary which is a generalization of
the result on the Euclidean unit ball in $\C^n$
\cite[Corollary 3.1]{T07} to the unit ball of a complex Banach space.

\begin{corollary}
Let $\omega$ and $\mu$ be normal functions and let $\varphi \in H(\B)$.
Then $T_{\varphi}: \mathcal{B}_{\mathcal{R}}(\B)_{\omega,0}
\to \mathcal{B}_{\mathcal{R}}(\B)_{\mu,0}$
is bounded if and only if
$\varphi \in \mathcal{B}_{\mathcal{R}}(\B)_{\mu,0}$ and
$T_{\varphi}: \mathcal{B}_{\mathcal{R}}(\B)_{\omega}
\to \mathcal{B}_{\mathcal{R}}(\B)_{\mu}$
is bounded. 
\end{corollary}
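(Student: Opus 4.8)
The plan is to derive the corollary immediately from Theorems \ref{bounded} and \ref{bounded-little}, the key observation being that the quantitative finiteness condition appearing in both theorems is one and the same. First I would record that the supremum condition \eqref{boundedcondition} characterizing little-Bloch boundedness in Theorem \ref{bounded-little} is literally identical to the condition \eqref{eq-bounded} characterizing Bloch boundedness in Theorem \ref{bounded}; both read
\[
\sup_{z\in \B}\mu(z)|\mathcal{R}\varphi(z)|\int_0^{\| z\|}\frac{1}{\omega(t)}\,dt<\infty.
\]

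Next, I would apply Theorem \ref{bounded-little}, which states that $T_{\varphi}:\mathcal{B}_{\mathcal{R}}(\B)_{\omega,0}\to\mathcal{B}_{\mathcal{R}}(\B)_{\mu,0}$ is bounded if and only if $\varphi\in\mathcal{B}_{\mathcal{R}}(\B)_{\mu,0}$ and \eqref{boundedcondition} holds. By the identity of the two conditions noted above together with Theorem \ref{bounded}, the condition \eqref{boundedcondition} is equivalent to the boundedness of $T_{\varphi}:\mathcal{B}_{\mathcal{R}}(\B)_{\omega}\to\mathcal{B}_{\mathcal{R}}(\B)_{\mu}$. Substituting this equivalence into the characterization furnished by Theorem \ref{bounded-little} yields exactly the stated conclusion: $T_{\varphi}$ is bounded on the little Bloch-type spaces if and only if $\varphi\in\mathcal{B}_{\mathcal{R}}(\B)_{\mu,0}$ and $T_{\varphi}$ is bounded on the Bloch-type spaces.

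Since all of the substantive work has already been carried out in the two preceding theorems, there is essentially no obstacle here; the only point requiring (trivial) verification is that the supremum expressions in \eqref{eq-bounded} and \eqref{boundedcondition} coincide, which is immediate from inspection. Thus the corollary is purely a matter of chaining the two equivalences, and no new estimates or test functions are needed.
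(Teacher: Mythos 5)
Your proposal is correct and is exactly the paper's intended argument: the paper derives this corollary directly from Theorems \ref{bounded} and \ref{bounded-little} by observing that the finiteness condition \eqref{eq-bounded} in Theorem \ref{bounded} coincides with the condition \eqref{boundedcondition} in Theorem \ref{bounded-little}, so the two equivalences chain together. Nothing further is needed.
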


\section{Compactness of extended Ces\`{a}ro operators}
\label{section-compact}
\setcounter{equation}{0}

In this section, we study the compactness of 
the extended Ces\`{a}ro operator 
$T_{\varphi}: \mathcal{B}_{\mathcal{R}}(\B)_{\omega}
\to \mathcal{B}_{\mathcal{R}}(\B)_{\mu}$
and 
$T_{\varphi}: \mathcal{B}_{\mathcal{R}}(\B)_{\omega,0}
\to \mathcal{B}_{\mathcal{R}}(\B)_{\mu,0}$.

The following lemma is
a generalization of
the result on the Euclidean unit ball in $\C^n$
\cite[Lemma 4.1]{T07} to the unit ball of a complex Banach space.
It can be proved
by a well-known argument
which uses Montel's theorem
(cf. \cite[Lemma 4.4]{BGLM17}).
We omit the proof.

\begin{lemma}
\label{lem-compact}
Let $\B$ be the unit ball of a complex Banach space.
Let $\omega$ and $\mu$ be normal functions and let $\varphi\in H(\B)$.
Then $T_{\varphi}: \mathcal{B}_{\mathcal{R}}(\B)_{\omega}
\to \mathcal{B}_{\mathcal{R}}(\B)_{\mu}$
is compact if and only if for any bounded sequence $\{ f_j\}$ in $\mathcal{B}_{\mathcal{R}}(\B)_{\omega}$
which converges to $0$ uniformly on any compact subset of $\B$,
we have
$\lim_{j\to\infty}\| T_{\varphi}f_j\|_{\mathcal{R}, \mu}=0$.
\end{lemma}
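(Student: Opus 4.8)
The statement is a standard ``sequential'' characterization of compactness for operators between Bloch-type spaces, and the natural route is to prove the two implications separately, relying throughout on the fact (Proposition \ref{p.banach}) that both $\mathcal{B}_{\mathcal{R}}(\B)_{\omega}$ and $\mathcal{B}_{\mathcal{R}}(\B)_{\mu}$ are Banach spaces. The plan is to first establish the easy direction and then the substantive one.

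For the forward implication, suppose $T_{\varphi}$ is compact and let $\{f_j\}$ be a bounded sequence in $\mathcal{B}_{\mathcal{R}}(\B)_{\omega}$ converging to $0$ uniformly on compact subsets of $\B$. I would argue by contradiction: if $\|T_{\varphi}f_j\|_{\mathcal{R},\mu}\not\to 0$, then after passing to a subsequence we may assume $\|T_{\varphi}f_j\|_{\mathcal{R},\mu}\geq \varepsilon$ for some $\varepsilon>0$. Since $\{f_j\}$ is bounded and $T_{\varphi}$ is compact, a further subsequence gives $T_{\varphi}f_{j_k}\to h$ in the norm of $\mathcal{B}_{\mathcal{R}}(\B)_{\mu}$ for some $h$. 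By Proposition \ref{p.growth} applied in the target space, norm convergence forces locally uniform convergence, so $T_{\varphi}f_{j_k}\to h$ uniformly on compact sets. On the other hand, the explicit formula for $T_{\varphi}$ together with $f_j\to 0$ locally uniformly shows $T_{\varphi}f_{j_k}\to 0$ pointwise (indeed locally uniformly), whence $h=0$; this contradicts $\|T_{\varphi}f_{j_k}\|_{\mathcal{R},\mu}\geq\varepsilon$.

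For the converse, assume the sequential condition holds. Let $\{f_j\}$ be any bounded sequence in $\mathcal{B}_{\mathcal{R}}(\B)_{\omega}$, say $\|f_j\|_{\mathcal{R},\omega}\leq M$; I must extract a subsequence along which $\{T_{\varphi}f_j\}$ converges in $\mathcal{B}_{\mathcal{R}}(\B)_{\mu}$. By Proposition \ref{p.growth}, the family $\{f_j\}$ is locally uniformly bounded on $\B$, so by Montel's theorem (the normal-families result for holomorphic functions on domains in a Banach space, cf.\ \cite[Lemma 4.4]{BGLM17}) some subsequence $\{f_{j_k}\}$ converges uniformly on compact subsets of $\B$ to a holomorphic limit $f$. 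The limit again satisfies $\|f\|_{\mathcal{R},\omega}\leq M$, so $g_k:=f_{j_k}-f$ is a bounded sequence in $\mathcal{B}_{\mathcal{R}}(\B)_{\omega}$ converging to $0$ uniformly on compact sets. Applying the hypothesis to $\{g_k\}$ yields $\|T_{\varphi}g_k\|_{\mathcal{R},\mu}\to 0$, i.e.\ $T_{\varphi}f_{j_k}\to T_{\varphi}f$ in $\mathcal{B}_{\mathcal{R}}(\B)_{\mu}$. Thus every bounded sequence has a subsequence whose image converges, which is exactly compactness of $T_{\varphi}$.

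The main obstacle is the invocation of Montel's theorem in the infinite-dimensional Banach-space setting: on an infinite-dimensional domain, local boundedness alone does not yield relative compactness in the compact-open topology (balls are not precompact), so one cannot naively extract a locally uniformly convergent subsequence. The correct substitute is the normal-families machinery used in \cite[Lemma 4.4]{BGLM17}, which is why the statement only asks for convergence on \emph{compact} subsets rather than on bounded balls; the diagonal extraction is carried out over an exhausting sequence of compact sets. Since the excerpt explicitly defers to that reference and omits the proof, I would make this dependence precise and then treat the two implications as above; the remaining estimates (local boundedness from Proposition \ref{p.growth} and the passage from norm to local-uniform convergence) are routine.
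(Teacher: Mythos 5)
Your plan is the same one the paper gestures at---the paper omits the proof of Lemma \ref{lem-compact}, saying only that it follows from ``a well-known argument which uses Montel's theorem (cf.\ \cite[Lemma 4.4]{BGLM17})''---and your forward implication is correct as written: norm convergence in $\mathcal{B}_{\mathcal{R}}(\B)_{\mu}$ implies locally uniform convergence by Proposition \ref{p.growth}, while the integral formula (note that $\mathcal{R}\varphi(tz)/t=D\varphi(tz)z$ stays bounded on $[0,1]\cdot K$ for $K\subset\B$ compact) forces $T_{\varphi}f_{j}\to 0$ locally uniformly, so the norm limit $h$ must vanish.

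The genuine gap is in the converse, exactly at the step you yourself flag as the main obstacle. The mechanism you propose---``the diagonal extraction is carried out over an exhausting sequence of compact sets''---does not exist in infinite dimensions: compact subsets of an infinite-dimensional Banach space are nowhere dense, so by Baire's theorem no countable family of compact sets can cover $\B$, let alone be cofinal among all compact subsets of $\B$. The correct mechanism, when it is available, is different: extract a subsequence converging pointwise on a countable \emph{dense} subset of $\B$, then upgrade to uniform convergence on compact sets via the local equicontinuity of $\{f_j\}$ (the family is locally uniformly bounded by Proposition \ref{p.growth}, hence locally uniformly Lipschitz by Cauchy estimates). That argument needs $X$ to be separable, and the restriction is not an artifact of the method: for non-separable $X$ the extraction is simply false. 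Take $X=\ell^{\infty}$ and $f_j(z)=z_j$ (the $j$-th coordinate functional). Then $\{f_j\}$ is bounded in $\mathcal{B}_{\mathcal{R}}(\B)_{\omega}$ (a normal $\omega$ is bounded on $[0,1)$), yet for any subsequence $(f_{j_k})$ the point $x\in\B$ with $x_{j_k}=(-1)^k/2$ and all other coordinates $0$ witnesses that $(f_{j_k})$ does not converge even pointwise on $\B$. (Equicontinuity plus Tychonoff does make bounded families relatively compact in the compact--open topology, but that topology is not metrizable here, and compactness of an operator between Banach spaces is a sequential property, so net compactness does not close the gap.) Consequently your converse---and, for that matter, the paper's own appeal to Montel's theorem---is justified only for separable $X$, which covers the finite-dimensional case and the separable Hilbert-space setting of \cite{BGLM17}; for a general complex Banach space one must either add a separability hypothesis or find a genuinely different argument, and your write-up should state this restriction explicitly rather than defer it to \cite[Lemma 4.4]{BGLM17}.
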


The following theorem is a generalization of
the result on the Euclidean unit ball in $\C^n$
\cite[Theorem 4.1]{T07} to the unit ball of 
a complex Banach space.
Blasco, Galindo, Lindstr\"om and Miralles \cite{BGLM17}
provided necessary and sufficient conditions for compactness of composition operators
on the space of Bloch functions on the unit ball of a complex Hilbert space
under additional relatively compactness assumptions on the set 
related to the composition symbol.
For $\varphi \in H(\B)$, we consider the set
\[
E_{\varepsilon, \rho}
=\{  z\in \B: \| z\|\leq\rho, \exists s\in [1,\rho^{-1}] \mbox{ s.t. }  \mu(sz)|\mathcal{R}\varphi(sz)|\geq\varepsilon\}
\]
and give the following compactness results of $T_{\varphi}$ under the assumption that
$E_{\varepsilon, \rho}$
is relatively compact in $\B$ for any $\varepsilon>0$ and $\rho\in (0,1)$.

\begin{theorem}
\label{compact1}
Let $\B$ be the unit ball of a complex Banach space.
Let $\omega$ and $\mu$ be normal functions and let $\varphi \in H(\B)$ be
such that the set
$E_{\varepsilon, \rho}$
is relatively compact in $\B$ for any $\varepsilon>0$ and $\rho\in (0,1)$.
Then
\begin{enumerate}
\item[$({\rm i})$]
Assume that $\int_0^1\frac{1}{\omega(t)}dt<\infty$.
Then  $T_{\varphi}: \mathcal{B}_{\mathcal{R}}(\B)_{\omega}
\to \mathcal{B}_{\mathcal{R}}(\B)_{\mu}$
is compact if and only if $\varphi \in \mathcal{B}_{\mathcal{R}}(\B)_{\mu}$.

\item[$({\rm ii})$]
Assume that $\int_0^1\frac{1}{\omega(t)}dt=\infty$.
Then $T_{\varphi}: \mathcal{B}_{\mathcal{R}}(\B)_{\omega}
\to \mathcal{B}_{\mathcal{R}}(\B)_{\mu}$
is compact if and only if 
\begin{equation}
\label{eq-compact1}
\lim_{\| z\|\to 1}\mu(z)|\mathcal{R}\varphi(z)|\int_0^{\| z\|}\frac{1}{\omega(t)}dt=0.
\end{equation}
\end{enumerate}
\end{theorem}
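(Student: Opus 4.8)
The starting point is the identity of Lemma \ref{Taylor} combined with $(T_{\varphi}f)(0)=0$, which gives
\[
\| T_{\varphi}f\|_{\mathcal{R},\mu}=\sup_{z\in\B}\mu(z)|f(z)||\mathcal{R}\varphi(z)|,
\qquad f\in\mathcal{B}_{\mathcal{R}}(\B)_{\omega}.
\]
I would run both parts through the compactness criterion of Lemma \ref{lem-compact}: it suffices to prove $\| T_{\varphi}f_j\|_{\mathcal{R},\mu}\to0$ for every sequence with $\|f_j\|_{\mathcal{R},\omega}\le L$ that converges to $0$ uniformly on compact subsets of $\B$. The common mechanism is to split the supremum over $\B$ according to whether $\mu(z)|\mathcal{R}\varphi(z)|$ is below or above a threshold $\varepsilon$: on the first region the factor $\mu|\mathcal{R}\varphi|$ is itself small while $|f_j|$ is controlled by the growth estimate of Proposition \ref{p.growth}; on the second region the points are forced into $E_{\varepsilon,\rho}$, on whose compact closure $f_j\to0$ uniformly and $\mu|\mathcal{R}\varphi|$ is bounded by continuity.

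For the necessity directions I proceed differently in the two cases. In (i) compactness forces boundedness, and since $\varphi=\varphi(0)+T_{\varphi}1$ with the constant $1\in\mathcal{B}_{\mathcal{R}}(\B)_{\omega}$, boundedness of $T_{\varphi}$ already yields $\varphi\in\mathcal{B}_{\mathcal{R}}(\B)_{\mu}$. In (ii) I would feed the test functions $F_{v}$ of Lemma \ref{testfunction2} into Lemma \ref{lem-compact}: for any sequence $v_k$ with $\|v_k\|\to1$, the family $\{F_{v_k}\}$ is bounded in $\mathcal{B}_{\mathcal{R}}(\B)_{\omega}$ and tends to $0$ uniformly on compacta (this is where $\int_0^1\frac{1}{\omega(t)}dt=\infty$ enters), so compactness gives $\| T_{\varphi}F_{v_k}\|_{\mathcal{R},\mu}\to0$. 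Since Lemma \ref{lem-g} gives $F_{v_k}(v_k)=f_{v_k}(v_k)=\int_0^{\|v_k\|^2}g(t)dt\ge\frac{C_1}{C_3}\int_0^{\|v_k\|}\frac{1}{\omega(t)}dt$, evaluating the supremum at $z=v_k$ forces $\mu(v_k)|\mathcal{R}\varphi(v_k)|\int_0^{\|v_k\|}\frac{1}{\omega(t)}dt\to0$, which is \eqref{eq-compact1}.

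For sufficiency I would first record boundedness of $T_{\varphi}$ via Theorem \ref{bounded}: in (i) this is immediate from $\mu|\mathcal{R}\varphi|\le M_{\varphi}$ and $\int_0^1\frac{1}{\omega}<\infty$, while in (ii) the decay \eqref{eq-compact1} controls large $\|z\|$ and relative compactness of $E_{1,\rho_0}$ bounds $\mu|\mathcal{R}\varphi|$ on the inner ball $\{\|z\|<\rho_0\}$. In case (ii) the estimate is then clean: for $\|z\|\ge\rho$ one has $\mu(z)|f_j(z)||\mathcal{R}\varphi(z)|\le C_4L\,\mu(z)|\mathcal{R}\varphi(z)|\bigl(1+\int_0^{\|z\|}\frac{1}{\omega(t)}dt\bigr)$, which is $<C_4L\varepsilon$ uniformly in $j$ once $\rho$ is close to $1$ by \eqref{eq-compact1}, and for $\|z\|\le\rho$ the threshold split reduces everything to $\sup_{\overline{E_{\varepsilon',\rho}}}|f_j|\to0$. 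In case (i) all $f\in\mathcal{B}_{\mathcal{R}}(\B)_{\omega}$ are bounded, and a point $w$ with $\|w\|>\rho$ is transported to the sphere by the radial identity $f_j(w)=f_j(z)+\int_1^{s}\frac{\mathcal{R}f_j(\tau z)}{\tau}d\tau$ with $z=\rho w/\|w\|$, $s=\|w\|/\rho$; the correction is at most $\frac{L}{\rho}\int_{\rho}^1\frac{1}{\omega(t)}dt$, which is small since $\int_0^1\frac{1}{\omega}<\infty$, and when moreover $\mu(w)|\mathcal{R}\varphi(w)|\ge\varepsilon$ the base point $z$ lies in $E_{\varepsilon,\rho}$ precisely because $s\in[1,\rho^{-1}]$.

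The main obstacle, and the reason the hypothesis on $E_{\varepsilon,\rho}$ is indispensable, is that in infinite dimensions the balls $\{\|z\|\le\rho\}$ and spheres $\{\|z\|=\rho\}$ are not compact, so uniform convergence of $f_j$ on compacta gives no control there. The crux is therefore to check that every relevant point — an interior point with $\mu|\mathcal{R}\varphi|\ge\varepsilon$ as well as the radial base point $\rho w/\|w\|$ of a near-boundary point with $\mu|\mathcal{R}\varphi|\ge\varepsilon$ — genuinely lands in $E_{\varepsilon,\rho}$; the scaling parameter $s\in[1,\rho^{-1}]$ in the definition of $E_{\varepsilon,\rho}$ is tailored exactly to capture these base points. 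The remaining delicacy is the bookkeeping: choosing $\varepsilon$, $\rho$, and the interior threshold $\varepsilon'$ in the right order, together with the finite constant $M_{\varepsilon',\rho}=\max_{\overline{E_{\varepsilon',\rho}}}\mu|\mathcal{R}\varphi|$, so that the two regional estimates combine into $\limsup_{j}\| T_{\varphi}f_j\|_{\mathcal{R},\mu}\lesssim\varepsilon$, whence the limit is $0$.
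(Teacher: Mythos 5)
Your proposal is correct and follows essentially the same route as the paper: the reduction via Lemma \ref{lem-compact} and the identity $\mathcal{R}(T_{\varphi}f)=f\,\mathcal{R}\varphi$, the necessity arguments via $\varphi=\varphi(0)+T_{\varphi}1$ in (i) and the test functions $F_{v}$ of Lemma \ref{testfunction2} in (ii), and the sufficiency argument combining the threshold split on $\mu|\mathcal{R}\varphi|$, uniform convergence on the compact closure of $E_{\varepsilon,\rho}$, and the radial transport $z\mapsto\rho z/\|z\|$ whose scaling parameter $s\in[1,\rho^{-1}]$ matches the definition of $E_{\varepsilon,\rho}$. The only (harmless) deviations are bookkeeping: in (ii) you use the decay \eqref{eq-compact1} together with Proposition \ref{p.growth} to treat near-boundary points directly instead of transporting them, and you secure boundedness on the inner ball via compactness of $E_{1,\rho_0}$ rather than via $\varphi\in\mathcal{B}_{\mathcal{R}}(\B)_{\mu,0}\subset\mathcal{B}_{\mathcal{R}}(\B)_{\mu}$ as the paper does.
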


\begin{proof}
(i)
First, assume that $T_{\varphi}: \mathcal{B}_{\mathcal{R}}(\B)_{\omega}
\to \mathcal{B}_{\mathcal{R}}(\B)_{\mu}$
is compact.
Then it is bounded and therefore, $\varphi \in \mathcal{B}_{\mathcal{R}}(\B)_{\mu}$
by the proof of Theorem \ref{bounded}.

Conversely, assume that  $\varphi \in \mathcal{B}_{\mathcal{R}}(\B)_{\mu}$.
Since $\int_0^1\frac{1}{\omega(t)}dt<\infty$,
\eqref{eq-bounded} holds and therefore $T_{\varphi}: \mathcal{B}_{\mathcal{R}}(\B)_{\omega}
\to \mathcal{B}_{\mathcal{R}}(\B)_{\mu}$
is bounded by Theorem \ref{bounded}.
For any $\varepsilon>0$,
there exists $\rho\in (1/2,1)$ such that
\begin{equation}
\label{eq4.3}
\mu(z)|\mathcal{R}\varphi(z)|\int_{\rho}^{\| z\|}\frac{1}{\omega(t)}dt<\frac{\varepsilon}{3},
\quad
\rho<\| z\|<1
\end{equation}
holds.
Let $\{ f_j\}$ be a bounded sequence in $\mathcal{B}_{\mathcal{R}}(\B)_{\omega}$
which converges to $0$ uniformly on any compact subset of $\B$.
We may assume that $\| f_j\|_{{\mathcal{R}},\omega}\leq 1$.
Then $|f_j|\leq C_{\rho}$ for all $j$ and $\| z\|\leq \rho$ by 
Proposition \ref{p.growth}, where 
\[
C_{\rho}=C_4\left(
1+\int_0^{\rho}
\frac{1}{\omega(t)}dt
\right).
\]
There exists a positive integer $N$ such that 
\[
|f_j(w)|\leq \frac{\varepsilon}{3\| \varphi\|_{\mathcal{R},\mu}+1},
\quad
j>N, w\in E_{\varepsilon/(3C_{\rho}),\rho}.
\]
Therefore,  for $\| z\|\leq \rho$ and $t=1$ or for $\rho<\| z\|<1$ and $t=\rho/\| z\|$, we have
\begin{equation}
\label{bound3}
\mu(z)|\mathcal{R}\varphi(z)||f_j(tz)| 
<
\frac{\varepsilon}{3},
\quad j>N.
\end{equation}
For $j>N$ and $\rho <\| z\|<1$, by $\| f_j\|_{{\mathcal{R}},\omega}\leq 1$,
\eqref{eq4.3} and \eqref{bound3},
we have
\begin{align}
\lefteqn{\mu(z)|\mathcal{R}\varphi(z)||f_j(z)|}
\nonumber \\
\quad &  \leq
\mu(z)|\mathcal{R}\varphi(z)|
\left| f_j(z)-f_j\left(\rho\frac{z}{\| z\|}\right)\right|
+\mu(z)|\mathcal{R}\varphi(z)|
\left| f_j\left(\rho\frac{z}{\| z\|}\right)\right|
\nonumber \\
\quad &  \leq
\mu(z)|\mathcal{R}\varphi(z)|
\int_{\rho/\| z\|}^1|\mathcal{R}f_j(tz)|\frac{dt}{t}
+\frac{\varepsilon}{3}
\nonumber \\
\quad &  \leq
\mu(z)|\mathcal{R}\varphi(z)|
\frac{\| z\|}{\rho}\int_{\rho/\| z\|}^1\frac{1}{\omega(t\| z\|)}{dt}
+\frac{\varepsilon}{3}
\nonumber \\
\quad & \leq
2\mu(z)|\mathcal{R}\varphi(z)|
\int_{\rho}^{\| z\|}\frac{1}{\omega(t)}{dt}
+\frac{\varepsilon}{3}
\nonumber \\
\quad & <
\varepsilon.
\label{bound4}
\end{align}
From \eqref{bound3} and \eqref{bound4},
we obtain
$\| T_{\varphi}f_j\|_{\mathcal{R},\mu}<\varepsilon$ for $j>N$.
By Lemma \ref{lem-compact},
$T_{\varphi}: \mathcal{B}_{\mathcal{R}}(\B)_{\omega}
\to \mathcal{B}_{\mathcal{R}}(\B)_{\mu}$
is compact.

(ii) 
Assume that $T_{\varphi}: \mathcal{B}_{\mathcal{R}}(\B)_{\omega}
\to \mathcal{B}_{\mathcal{R}}(\B)_{\mu}$
is compact.
If $\varphi$ does not satisfy \eqref{eq-compact1},
then there exist $\varepsilon>0$ and a sequence $\{ z_j\}\subset \B$ such that
$\lim _{j\to \infty}\| z_j\|=1$ and
\begin{equation}
\label{eq-opposite}
\mu(z_j)|\mathcal{R}\varphi(z_j)|\int_0^{\| z_j\|}\frac{1}{\omega(t)}dt\geq \varepsilon,
\quad j=1,2,3,\dots.
\end{equation}
We may assume that $\| z_j\|>r_1$,
where $r_1$ is the constant in Lemma \ref{lem-g}.
Let $f_j(z)=F_{z_j}(z)$ for $z\in \B$,
where $F_{z_j}$ is the function defined in Lemma \ref{testfunction2}.
From Lemma \ref{testfunction2},
$\{ f_j\}$ is a bounded sequence in $\mathcal{B}_{\mathcal{R}}(\B)_{\omega,0}$
and $f_j\to 0$ uniformly on any compact subset of $\B$.
Then $\lim_{j\to \infty}\| T_{\varphi}f_j\|_{\mathcal{R}, \mu}=0$ by Lemma \ref{lem-compact}.
On the other hand,
by Lemmas \ref{lem-g}, \ref{Taylor} and \eqref{eq-opposite}, we have
\begin{align*}
\| T_{\varphi}f_j\|_{\mathcal{R}, \mu}
&=
\sup_{z\in\B}\mu(z)|\mathcal{R}\varphi(z)||f_j(z)|
\\
&\geq
\mu(z_j)|\mathcal{R}\varphi(z_j)||f_j(z_j)|
\\
&=
\mu(z_j)|\mathcal{R}\varphi(z_j)|
\int_0^{\| z_j\|^2}g(t)dt
\\
&\geq
\frac{C_1}{C_3}\mu(z_j)|\mathcal{R}\varphi(z_j)|
\int_0^{\| z_j\|}\frac{1}{\omega(t)}dt
\\
&\geq
\frac{C_1}{C_3}\varepsilon.
\end{align*}
This is a contradiction.
Thus, we obtain \eqref{eq-compact1}.

Conversely, assume that  \eqref{eq-compact1} holds. 
Then $\varphi\in \mathcal{B}_{\mathcal{R}}(\B)_{\mu,0}$
and
for any $\varepsilon>0$,
there exists $\rho\in (1/2,1)$ such that
\[
\mu(z)|\mathcal{R}\varphi(z)|\int_{0}^{\| z\|}\frac{1}{\omega(t)}dt<\frac{\varepsilon}{3},
\quad
\rho<\| z\|<1
\]
holds.
The rest of the proof is similar to the case (i).
This completes the proof.
\end{proof}

The following theorem is a generalization of
the result on the Euclidean unit ball in $\C^n$
\cite[Theorem 4.2]{T07} to the unit ball of 
a complex Banach space.

\begin{theorem}
\label{compact2}
Let $\B$ be the unit ball of a complex Banach space.
Let $\omega$ and $\mu$ be normal functions and 
let $\varphi \in H(\B)$
be
such that the set
$E_{\varepsilon, \rho}$
is relatively compact in $\B$ for any $\varepsilon>0$ and $\rho\in (0,1)$.
Then
$T_{\varphi}: \mathcal{B}_{\mathcal{R}}(\B)_{\omega,0}
\to \mathcal{B}_{\mathcal{R}}(\B)_{\mu,0}$
is compact if and only if 
\begin{equation}
\label{compactcondition}
\lim_{\| z\|\to 1}
\mu(z)|\mathcal{R}\varphi(z)|\int_0^{\| z\|}\frac{1}{\omega(t)}dt=0.
\end{equation}
\end{theorem}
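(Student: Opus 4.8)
The plan is to prove both directions by relating compactness of $T_{\varphi}$ on the little Bloch-type spaces to the single limit condition \eqref{compactcondition}, reusing the machinery already developed for Theorem \ref{compact1}(ii). First I would establish the forward direction (compactness implies \eqref{compactcondition}). Assuming $T_{\varphi}: \mathcal{B}_{\mathcal{R}}(\B)_{\omega,0}\to \mathcal{B}_{\mathcal{R}}(\B)_{\mu,0}$ is compact, I argue by contradiction exactly as in the proof of Theorem \ref{compact1}(ii): if \eqref{compactcondition} fails, there exist $\varepsilon>0$ and a sequence $\{z_j\}\subset \B$ with $\|z_j\|\to 1$ and $\|z_j\|>r_1$ satisfying the analogue of \eqref{eq-opposite}. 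The key point is that the test functions $F_{z_j}$ from Lemma \ref{testfunction2} already live in $\mathcal{B}_{\mathcal{R}}(\B)_{\omega,0}$, form a bounded sequence there, and converge to $0$ uniformly on compact subsets of $\B$ (here I use that $\int_0^1\frac{1}{\omega(t)}dt=\infty$, which is forced because otherwise the integral in \eqref{compactcondition} is bounded and $\varphi\in \mathcal{B}_{\mathcal{R}}(\B)_{\mu}$ would make the condition trivial). An analogue of Lemma \ref{lem-compact} for the little spaces then forces $\|T_{\varphi}f_j\|_{\mathcal{R},\mu}\to 0$, while the lower estimate via Lemma \ref{Taylor} and Lemma \ref{lem-g} gives $\|T_{\varphi}f_j\|_{\mathcal{R},\mu}\geq \frac{C_1}{C_3}\varepsilon$, a contradiction.

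For the converse, assume \eqref{compactcondition} holds. This immediately yields both $\varphi\in \mathcal{B}_{\mathcal{R}}(\B)_{\mu,0}$ and the supremum bound \eqref{boundedcondition}, so by Theorem \ref{bounded-little} the operator $T_{\varphi}: \mathcal{B}_{\mathcal{R}}(\B)_{\omega,0}\to \mathcal{B}_{\mathcal{R}}(\B)_{\mu,0}$ is bounded. To upgrade to compactness, I take any bounded sequence $\{f_j\}$ in $\mathcal{B}_{\mathcal{R}}(\B)_{\omega,0}$ converging to $0$ uniformly on compact subsets, normalize to $\|f_j\|_{\mathcal{R},\omega}\leq 1$, and show $\|T_{\varphi}f_j\|_{\mathcal{R},\mu}\to 0$. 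Given $\varepsilon>0$, choose $\rho\in (1/2,1)$ so that $\mu(z)|\mathcal{R}\varphi(z)|\int_0^{\|z\|}\frac{1}{\omega(t)}dt<\varepsilon/3$ for $\rho<\|z\|<1$. Then I split the estimate on $\mu(z)|\mathcal{R}(T_{\varphi}f_j)(z)|=\mu(z)|\mathcal{R}\varphi(z)||f_j(z)|$ into the regions $\|z\|\leq \rho$ and $\rho<\|z\|<1$, following the chain of inequalities \eqref{bound3}–\eqref{bound4} verbatim. The relative compactness of $E_{\varepsilon/(3C_{\rho}),\rho}$ in $\B$ is precisely what lets me apply uniform-on-compacta convergence of $\{f_j\}$ to control $|f_j(w)|$ on the relevant set, yielding the bound $<\varepsilon/3$ on $\|z\|\leq\rho$; on $\rho<\|z\|<1$ the telescoping estimate with \eqref{eq4.3} replaced by the stronger bound on $\int_0^{\|z\|}$ gives the remaining control.

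The main obstacle, and the reason the relative compactness hypothesis on $E_{\varepsilon,\rho}$ is indispensable, is the passage from pointwise vanishing of $\mu(z)|\mathcal{R}\varphi(z)|$ to a uniform estimate of $|f_j(tz)|$ on the set where $\mu(sz)|\mathcal{R}\varphi(sz)|$ is large. In infinite dimensions the closed ball $\{\|z\|\leq\rho\}$ is not compact, so uniform convergence of $\{f_j\}$ on all of it is unavailable; the hypothesis restricts attention to the genuinely compact set $E_{\varepsilon/(3C_{\rho}),\rho}$ where the symbol's radial derivative is not negligible, and there the uniform convergence does apply. Once this point is handled as in the proof of Theorem \ref{compact1}, the remaining steps are routine and I would simply invoke the little-space analogue of Lemma \ref{lem-compact} (provable by the same Montel-type argument) to conclude compactness from $\lim_{j\to\infty}\|T_{\varphi}f_j\|_{\mathcal{R},\mu}=0$.
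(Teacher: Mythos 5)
Your plan follows the paper's proof closely (the same test functions $F_{z_j}$, the same lower-bound contradiction via Lemmas \ref{lem-g} and \ref{Taylor}, and Theorem \ref{bounded-little} plus the machinery of Theorem \ref{compact1} for sufficiency), but the necessity direction contains a genuine error: the parenthetical claim that $\int_0^1\frac{dt}{\omega(t)}=\infty$ ``is forced because otherwise the integral in \eqref{compactcondition} is bounded and $\varphi\in\mathcal{B}_{\mathcal{R}}(\B)_{\mu}$ would make the condition trivial.'' This implication is false. The case $\int_0^1\frac{dt}{\omega(t)}<\infty$ is non-vacuous (e.g.\ $\omega(r)=\sqrt{1-r}$ is normal), and in that case the factor $\int_0^{\|z\|}\frac{dt}{\omega(t)}$ tends to a finite \emph{positive} limit, so \eqref{compactcondition} is equivalent to $\lim_{\|z\|\to1}\mu(z)|\mathcal{R}\varphi(z)|=0$, i.e.\ to $\varphi\in\mathcal{B}_{\mathcal{R}}(\B)_{\mu,0}$; mere membership $\varphi\in\mathcal{B}_{\mathcal{R}}(\B)_{\mu}$ gives boundedness of the product, not its vanishing. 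Nor can your contradiction argument be run in this case, since the locally uniform convergence $F_{z_j}\to 0$ in Lemma \ref{testfunction2} is only available when $\int_0^1\frac{dt}{\omega(t)}=\infty$. The correct treatment, which is the paper's, is: compactness implies boundedness of $T_\varphi$ on the little spaces, hence $\varphi=\varphi(0)+T_\varphi 1\in\mathcal{B}_{\mathcal{R}}(\B)_{\mu,0}$ by Theorem \ref{bounded-little} (note $1\in\mathcal{B}_{\mathcal{R}}(\B)_{\omega,0}$), and this membership already yields \eqref{compactcondition} when the integral converges; the test-function argument is reserved for the divergent case, where your version of it is correct.

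A second, smaller issue is the ``little-space analogue of Lemma \ref{lem-compact}'' you invoke for sufficiency. In the Montel-type proof of such a criterion, a bounded sequence $\{f_j\}\subset\mathcal{B}_{\mathcal{R}}(\B)_{\omega,0}$ has a subsequence converging locally uniformly to some $f$, but $f$ is only known to lie in $\mathcal{B}_{\mathcal{R}}(\B)_{\omega}$, not in the little space; hence the differences $f_{j_k}-f$ are not admissible test sequences for a criterion quantified over little-space sequences, and your analogue does not follow by ``the same argument'' as stated. Two repairs are available: either verify the vanishing-norm condition for bounded \emph{big}-space sequences (your estimates \eqref{bound3}--\eqref{bound4} do exactly this, since they use only $\|f_j\|_{\mathcal{R},\omega}\leq 1$ and Proposition \ref{p.growth}) and then note that $T_\varphi f_{j_k}\to T_\varphi f$ in norm with every term in the closed subspace $\mathcal{B}_{\mathcal{R}}(\B)_{\mu,0}$; or, as the paper does, simply quote Theorem \ref{compact1} for compactness on the big spaces and restrict, Theorem \ref{bounded-little} guaranteeing that $T_\varphi$ maps $\mathcal{B}_{\mathcal{R}}(\B)_{\omega,0}$ into $\mathcal{B}_{\mathcal{R}}(\B)_{\mu,0}$. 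With these two repairs your proof is complete and coincides in substance with the paper's.
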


\begin{proof}
Assume that \eqref{compactcondition} holds,
Then, 
by Theorems \ref{bounded-little} and \ref{compact1},
we obtain that 
$T_{\varphi}: \mathcal{B}_{\mathcal{R}}(\B)_{\omega,0}
\to \mathcal{B}_{\mathcal{R}}(\B)_{\mu,0}$
is compact.

Conversely, assume that
$T_{\varphi}: \mathcal{B}_{\mathcal{R}}(\B)_{\omega,0}
\to \mathcal{B}_{\mathcal{R}}(\B)_{\mu,0}$
is compact.
Then $\varphi \in \mathcal{B}_{\mathcal{R}}(\B)_{\mu,0}$
by Theorem \ref{bounded-little}.
Therefore, if $\int_0^1\frac{1}{\omega(t)}dt<\infty$,
then  \eqref{compactcondition} holds.
We consider the case $\int_0^1\frac{1}{\omega(t)}dt=\infty$.
If  \eqref{compactcondition} does not hold,
then there exist $\varepsilon>0$ and 
 a sequence $\{ z_j\}\subset \B$ such that
$\lim _{j\to \infty}\| z_j\|=1$ and
\begin{equation}
\label{eq-opposite2}
\mu(z_j)|\mathcal{R}\varphi(z_j)|\int_0^{\| z_j\|}\frac{1}{\omega(t)}dt\geq \varepsilon,
\quad j=1,2,3,\dots.
\end{equation}
We may assume that $\| z_j\|>r_1$,
where $r_1$ is the constant in Lemma \ref{lem-g}.
Let $f_j(z)=F_{z_j}(z)$ for $z\in \B$,
where $F_{z_j}$ is the function defined in Lemma \ref{testfunction2}.
From Lemma \ref{testfunction2},
$\{ f_j\}$ is a bounded sequence in $\mathcal{B}_{\mathcal{R}}(\B)_{\omega,0}$
and $f_j\to 0$ uniformly on any compact subset of $\B$.
Since $T_{\varphi}: \mathcal{B}_{\mathcal{R}}(\B)_{\omega,0}
\to \mathcal{B}_{\mathcal{R}}(\B)_{\mu,0}$
is compact,
we may assume that there exists some
$g\in \mathcal{B}_{\mathcal{R}}(\B)_{\mu,0}$ such that 
$\| T_{\varphi}f_j-g\|_{\mathcal{R},\mu}\to 0$
as $j\to \infty$.
Then for each $z\in \B$, we have
\[
g(z)=\lim_{j\to \infty}T_{\varphi}f_j(z)=T_{\varphi}(\lim_{j\to \infty}f_j)(z)=T_{\varphi}0(z)=0.
\]
Thus, we have $\| T_{\varphi}f_j\|_{\mathcal{R},\mu}\to 0$
as $j\to \infty$.
By the proof of Theorem \ref{compact1}, this contradicts with
\eqref{eq-opposite2}.
Thus,  \eqref{compactcondition} holds.
This completes the proof.
\end{proof}

From Theorems \ref{compact1} and \ref{compact2},
we obtain the following corollaries which are generalization of
the results on the Euclidean unit ball in $\C^n$
\cite[Corollaries 4.1 and 4.2]{T07} to the unit ball of 
a complex Banach space.

\begin{corollary}
Let $\B$ be the unit ball of a complex Banach space.
Let $\omega$ and $\mu$ be normal functions and let $\varphi \in H(\B)$
be
such that the set
$E_{\varepsilon, \rho}$
is relatively compact in $\B$ for any $\varepsilon>0$ and $\rho\in (0,1)$.
Assume that $\int_0^1\frac{1}{\omega(t)}dt=\infty$.
Then the following statements are equivalent:
\begin{enumerate}
\item[$({\rm i})$]
$T_{\varphi}: \mathcal{B}_{\mathcal{R}}(\B)_{\omega}
\to \mathcal{B}_{\mathcal{R}}(\B)_{\mu}$
is compact;
\item[$({\rm ii})$]
$T_{\varphi}: \mathcal{B}_{\mathcal{R}}(\B)_{\omega,0}
\to \mathcal{B}_{\mathcal{R}}(\B)_{\mu,0}$
is compact;
\item[$({\rm iii})$]
\[
\lim_{\| z\|\to 1}\mu(z)|\mathcal{R}\varphi(z)|\int_0^{\| z\|}\frac{1}{\omega(t)}dt=0.
\]
\end{enumerate}
\end{corollary}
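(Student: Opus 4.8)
The plan is to derive both equivalences directly from the two compactness theorems already established, since condition (iii) is literally the same displayed limit that appears in each of them. The hypotheses of the corollary—that $\omega$ and $\mu$ are normal, that $\varphi\in H(\B)$, and that $E_{\varepsilon,\rho}$ is relatively compact in $\B$ for every $\varepsilon>0$ and $\rho\in(0,1)$—are precisely the standing assumptions of Theorems \ref{compact1} and \ref{compact2}, so both theorems are immediately applicable. The extra assumption $\int_0^1\frac{1}{\omega(t)}\,dt=\infty$ selects case (ii) of Theorem \ref{compact1}.

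First I would establish $(\mathrm{i})\Leftrightarrow(\mathrm{iii})$. Because $\int_0^1\frac{1}{\omega(t)}\,dt=\infty$, Theorem \ref{compact1}(ii) applies and asserts that $T_{\varphi}:\mathcal{B}_{\mathcal{R}}(\B)_{\omega}\to\mathcal{B}_{\mathcal{R}}(\B)_{\mu}$ is compact if and only if \eqref{eq-compact1} holds. Since \eqref{eq-compact1} is exactly the limit in statement (iii), this yields $(\mathrm{i})\Leftrightarrow(\mathrm{iii})$ with no further work.

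Next I would establish $(\mathrm{ii})\Leftrightarrow(\mathrm{iii})$. Theorem \ref{compact2} asserts that $T_{\varphi}:\mathcal{B}_{\mathcal{R}}(\B)_{\omega,0}\to\mathcal{B}_{\mathcal{R}}(\B)_{\mu,0}$ is compact if and only if \eqref{compactcondition} holds; as \eqref{compactcondition} is again the limit in (iii), this gives $(\mathrm{ii})\Leftrightarrow(\mathrm{iii})$. Chaining the two equivalences through the common condition (iii) produces $(\mathrm{i})\Leftrightarrow(\mathrm{iii})\Leftrightarrow(\mathrm{ii})$, which is the assertion of the corollary.

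There is no substantive obstacle here: the corollary is a formal consequence of the preceding theorems, and the only point requiring care is to verify that the relative-compactness hypothesis on $E_{\varepsilon,\rho}$ transfers verbatim to both theorems (which it does) and that the case distinction in Theorem \ref{compact1} is resolved by the divergence hypothesis on $\int_0^1\frac{1}{\omega(t)}\,dt$. Thus the proof is a short citation of Theorems \ref{compact1}(ii) and \ref{compact2}.

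\begin{proof}
Since $\int_0^1\frac{1}{\omega(t)}\,dt=\infty$, Theorem \ref{compact1}(ii) shows that statement $(\mathrm{i})$ holds if and only if
\[
\lim_{\|z\|\to 1}\mu(z)|\mathcal{R}\varphi(z)|\int_0^{\|z\|}\frac{1}{\omega(t)}\,dt=0,
\]
which is statement $(\mathrm{iii})$. Hence $(\mathrm{i})\Leftrightarrow(\mathrm{iii})$. Likewise, Theorem \ref{compact2} shows that statement $(\mathrm{ii})$ holds if and only if \eqref{compactcondition} holds, and \eqref{compactcondition} is again statement $(\mathrm{iii})$. Hence $(\mathrm{ii})\Leftrightarrow(\mathrm{iii})$. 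Combining these, $(\mathrm{i})\Leftrightarrow(\mathrm{iii})\Leftrightarrow(\mathrm{ii})$, which proves the equivalence of the three statements.
\end{proof}
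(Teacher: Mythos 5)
Your proof is correct and follows exactly the route the paper intends: the paper states this corollary as an immediate consequence of Theorems \ref{compact1} and \ref{compact2}, and your argument—invoking Theorem \ref{compact1}(ii) (selected by the divergence of $\int_0^1\frac{1}{\omega(t)}\,dt$) for $(\mathrm{i})\Leftrightarrow(\mathrm{iii})$ and Theorem \ref{compact2} for $(\mathrm{ii})\Leftrightarrow(\mathrm{iii})$, then chaining through $(\mathrm{iii})$—is precisely that derivation, spelled out.
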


\begin{corollary}
Let $\B$ be the unit ball of a complex Banach space.
Let $\omega$ and $\mu$ be normal functions and let $\varphi \in H(\B)$
be
such that the set
$E_{\varepsilon, \rho}$
is relatively compact in $\B$ for any $\varepsilon>0$ and $\rho\in (0,1)$.
Assume that $\int_0^1\frac{1}{\omega(t)}dt<\infty$.
Then $T_{\varphi}: \mathcal{B}_{\mathcal{R}}(\B)_{\omega,0}
\to \mathcal{B}_{\mathcal{R}}(\B)_{\mu,0}$
is compact if and only if
$\varphi \in \mathcal{B}_{\mathcal{R}}(\B)_{\mu,0}$.
\end{corollary}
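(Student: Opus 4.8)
The plan is to derive this corollary directly from Theorem~\ref{compact2}, using the hypothesis $\int_0^1\frac{1}{\omega(t)}dt<\infty$ to collapse the boundary limit \eqref{compactcondition} onto the defining condition of the little Bloch-type space. No genuinely new argument is needed; the content is simply that, in the finite-integral regime, the weight $\int_0^{\| z\|}\frac{1}{\omega(t)}dt$ is harmless near the boundary.

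First I would invoke Theorem~\ref{compact2}. Since $\varphi\in H(\B)$ satisfies the standing assumption that $E_{\varepsilon,\rho}$ is relatively compact in $\B$ for all $\varepsilon>0$ and $\rho\in(0,1)$, that theorem asserts that $T_{\varphi}:\mathcal{B}_{\mathcal{R}}(\B)_{\omega,0}\to\mathcal{B}_{\mathcal{R}}(\B)_{\mu,0}$ is compact if and only if \eqref{compactcondition} holds, that is,
\[
\lim_{\| z\|\to 1}\mu(z)|\mathcal{R}\varphi(z)|\int_0^{\| z\|}\frac{1}{\omega(t)}dt=0.
\]

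Next I would show that, under $\int_0^1\frac{1}{\omega(t)}dt<\infty$, this is equivalent to $\lim_{\| z\|\to 1}\mu(z)|\mathcal{R}\varphi(z)|=0$, which is precisely the statement $\varphi\in\mathcal{B}_{\mathcal{R}}(\B)_{\mu,0}$ by definition of the little Bloch-type space. The key observation is that the integral factor is pinched between two positive constants once $\| z\|$ is bounded away from $0$: since $\omega$ is positive and continuous on $[0,1)$, for $1/2\le\| z\|<1$ we have
\[
0<\int_0^{1/2}\frac{1}{\omega(t)}dt\le\int_0^{\| z\|}\frac{1}{\omega(t)}dt\le\int_0^1\frac{1}{\omega(t)}dt<\infty.
\]
Hence $\mu(z)|\mathcal{R}\varphi(z)|\int_0^{\| z\|}\frac{1}{\omega(t)}dt\simeq\mu(z)|\mathcal{R}\varphi(z)|$ as $\| z\|\to 1$, so one limit vanishes if and only if the other does. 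Combining this with the equivalence supplied by Theorem~\ref{compact2} yields the claimed characterization.

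I do not expect a real obstacle here, as the corollary is a routine specialization of Theorem~\ref{compact2}. The only point requiring a moment of care is the equivalence in the previous paragraph, which rests entirely on the boundedness of $\int_0^{\| z\|}\frac{1}{\omega(t)}dt$ both away from $0$ and from above; this is exactly where the hypothesis $\int_0^1\frac{1}{\omega(t)}dt<\infty$ is used. As an alternative for the ``only if'' direction one could instead note that compactness forces boundedness of $T_{\varphi}$, whence $\varphi\in\mathcal{B}_{\mathcal{R}}(\B)_{\mu,0}$ by Theorem~\ref{bounded-little}; but the single-equivalence route via Theorem~\ref{compact2} dispatches both implications simultaneously and is cleaner.
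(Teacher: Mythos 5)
Your proposal is correct and follows essentially the same route as the paper, which states this corollary as an immediate consequence of Theorem~\ref{compact2}: the only content to supply is that, when $\int_0^1\frac{1}{\omega(t)}dt<\infty$, the factor $\int_0^{\|z\|}\frac{1}{\omega(t)}dt$ is pinched between the positive constants $\int_0^{1/2}\frac{1}{\omega(t)}dt$ and $\int_0^1\frac{1}{\omega(t)}dt$ for $\|z\|\geq 1/2$, so condition \eqref{compactcondition} is equivalent to $\varphi\in\mathcal{B}_{\mathcal{R}}(\B)_{\mu,0}$, which is exactly your argument (and is the same observation the paper itself uses inside the proof of Theorem~\ref{compact2} to handle the case $\int_0^1\frac{1}{\omega(t)}dt<\infty$).
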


\begin{remark}
Let $\B$ be the unit ball of a finite dimensional complex Banach space. 
Then $E_{\varepsilon,\rho}$ is relatively compact in $\B$
for any $\varepsilon>0$ and $\rho\in (0,1)$.
Thus, in the finite dimensional case, 
Theorems 4.2, 4.3 and Corollaries 4.4, 4.5 hold without the assumption 
that $E_{\varepsilon,\rho}$ is relatively compact in $\B$.
\end{remark}
\noindent
{\bf Acknowledgments.}
{Hidetaka Hamada was partially supported by JSPS KAKENHI Grant
Number JP16K05217.
}

\bibliographystyle{amsplain}

\end{document}